\PassOptionsToPackage{backref=page}{hyperref}
\documentclass[12pt,letterpaper,reqno]{amsart}

\usepackage[T1]{fontenc}
\usepackage[utf8]{inputenc}
\usepackage{amsmath,amssymb,amsfonts,amsthm}
\usepackage{mathtools}
\usepackage{aliascnt}
\usepackage{microtype}
\usepackage{enumitem}
\usepackage{booktabs}
\usepackage{xcolor}
\usepackage{doi}
\usepackage{hyperref}
\usepackage{tikz}
\usetikzlibrary{positioning,arrows.meta}
\usepackage{bookmark}
\usepackage[capitalize,noabbrev]{cleveref}

\hypersetup{
  pdfstartview={FitH},
  colorlinks=true,
  linkcolor=blue!55!black,
  citecolor=green!35!black,
  urlcolor=blue!55!black
}
\renewcommand*{\backref}[1]{}
\renewcommand*{\backrefalt}[4]{%
  \ifcase #1
  \or
    \quad$\hookleftarrow$, cited on page~#2%
  \else
    \quad$\hookleftarrow$, cited on pages~#2%
  \fi
}

\newtheorem{theorem}{Theorem}[section]

\newaliascnt{lemma}{theorem}
\newtheorem{lemma}[lemma]{Lemma}
\aliascntresetthe{lemma}

\newaliascnt{proposition}{theorem}
\newtheorem{proposition}[proposition]{Proposition}
\aliascntresetthe{proposition}

\newaliascnt{corollary}{theorem}
\newtheorem{corollary}[corollary]{Corollary}
\aliascntresetthe{corollary}

\newaliascnt{conjecture}{theorem}
\newtheorem{conjecture}[conjecture]{Conjecture}
\aliascntresetthe{conjecture}

\newaliascnt{problem}{theorem}

\aliascntresetthe{problem}

\theoremstyle{definition}
\newaliascnt{definition}{theorem}

\aliascntresetthe{definition}

\newaliascnt{example}{theorem}

\aliascntresetthe{example}

\newaliascnt{remark}{theorem}

\aliascntresetthe{remark}

\crefname{theorem}{Theorem}{Theorems}
\Crefname{theorem}{Theorem}{Theorems}
\crefname{lemma}{Lemma}{Lemmas}
\Crefname{lemma}{Lemma}{Lemmas}
\crefname{proposition}{Proposition}{Propositions}
\Crefname{proposition}{Proposition}{Propositions}
\crefname{corollary}{Corollary}{Corollaries}
\Crefname{corollary}{Corollary}{Corollaries}
\crefname{conjecture}{Conjecture}{Conjectures}
\Crefname{conjecture}{Conjecture}{Conjectures}

\DeclareMathOperator{\Aut}{Aut}
\DeclareMathOperator{\End}{End}
\DeclareMathOperator{\Gal}{Gal}
\DeclareMathOperator{\Hom}{Hom}
\DeclareMathOperator{\Irr}{Irr}
\DeclareMathOperator{\rad}{rad}
\DeclareMathOperator{\soc}{soc}
\DeclareMathOperator{\Tr}{Tr}
\newcommand{\modcat}{\operatorname{mod}}

\newcommand{\op}{\mathrm{op}}
\newcommand{\id}{\mathrm{id}}
\newcommand{\kkbar}{\overline{k}}

\begin{document}

\title[Components of Auslander--Reiten quivers]
{Infinitely many components in Auslander--Reiten quivers of representation-infinite algebras over perfect fields}

\author[W.~Chang]{Wen Chang}
\address{School of Mathematics and Statistics, Shaanxi Normal University, Xi'an 710062, P.~R.~China}
\email{changwen161@163.com}

\author[Q.~Tang]{Quanyu Tang}
\address{School of Mathematical Sciences, University of Science and Technology of China, Hefei 230026, P.~R.~China}
\email{tangquanyu827@gmail.com}

\subjclass[2020]{Primary 16G70; Secondary 16G60, 12F10}

\keywords{Auslander--Reiten quiver, connected component, representation-infinite algebra, representation embedding, separable base change}

\begin{abstract}
Let $k$ be a perfect field and let $A$ be a representation-infinite finite-dimensional $k$-algebra. We prove that the Auslander--Reiten quiver of $A$ has infinitely many connected components. This establishes, for finite-dimensional algebras over perfect fields, a conjecture of Auslander, Reiten, and Smal\o{} concerning Artin algebras. Over an algebraically closed field, the proof combines a localized polynomial representation embedding with semilinear twists induced by field automorphisms. The passage from a perfect field to its algebraic closure is obtained by separable base change: we prove that if the Auslander--Reiten quiver of $A$ has only finitely many components, then the same holds for the scalar extension to the algebraic closure. 
\end{abstract}

\maketitle

\section{Introduction}\label{sec:introduction}
Almost split sequences were introduced by Auslander and Reiten
in the 1970s \cite{AR75}, and have since become a cornerstone
of the representation theory of Artin algebras.  They encode
the local morphism-theoretic structure of an algebra by
organizing its indecomposable modules through irreducible
morphisms.  The resulting Auslander--Reiten quiver (AR-quiver for brevity)
assembles this local information into a single global
combinatorial invariant: its vertices are the isomorphism classes of indecomposable modules, while its arrows and valuations record the nonzero spaces of irreducible morphisms between them. Almost split sequences endow the quiver with its translation and mesh structure.

In this way, the AR-quiver
is often the first invariant one computes when approaching a
new class of algebras---its connected components, their
shapes, and the possible cardinalities of components reflect
the representation type and the overall complexity of the
category.  Determining how many components an AR-quiver can
have, and when this number is infinite, is therefore a
fundamental structural question.

An Artin algebra is of infinite representation type if it has infinitely many pairwise nonisomorphic indecomposable finitely generated modules.
For the background on representation theory of Artin algebras, we refer the reader to the monograph of Auslander, Reiten, and
Smal{\o}~\cite{ARS}.  At the end of that book the authors collected
several conjectures and open problems, among them the following
conjecture~\cite[Conjecture~(3), p.~409]{ARS}.

\begin{conjecture}\label{conj:ars}
Let $A$ be an Artin algebra with associated AR-quiver $\Gamma_A$. If $A$ is of infinite representation type, then $\Gamma_A$ has infinitely many connected components.
\end{conjecture}

It has been verified in several classes of algebras, for example, \emph{hereditary Artin algebras} by Dlab and Ringel \cite{DlabRingel, Ringel78}, \emph{finite-dimensional algebras of tame type} over an algebraically closed field by Crawley-Boevey \cite{CrawleyBoevey} (the analogous result over a perfect field, when the scalar extension to the algebraic closure is tame, follows from~\cite[Theorem~3.2 and Corollary~6.16]{CanDeVicentePerez}), a block of a \emph{cocommutative Hopf algebra} whose dual is local by Farnsteiner \cite{Farnsteiner00}, and certain \emph{self-injective algebras} satisfying suitable finite-generation hypotheses on cohomology by K\"ulshammer~\cite{Kuelshammer13,KuelshammerCorrigendum}. 
Our main result settles the problem for any finite-dimensional algebra over a perfect field.

\begin{theorem}\label{thm:main}
Let $k$ be a perfect field, and let $A$ be a finite-dimensional $k$-algebra
of infinite representation type.  Then $\Gamma_A$ has infinitely many
connected components.
\end{theorem}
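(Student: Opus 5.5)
The plan follows the strategy in the abstract: first settle the case $k=\kkbar$ algebraically closed, then descend along the separable extension $\kkbar/k$.

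\emph{Algebraically closed case.} Let $k$ be algebraically closed and let $A$ be representation-infinite. Suppose, for contradiction, that $\Gamma_A$ has only finitely many components. I would use the localized polynomial representation embedding: since $A$ is representation-infinite, the second Brauer--Thrall theorem (Bautista, Bongartz) supplies a localization $R=k[x]_f$ and a finitely generated $A$-$R$-bimodule $M$, free over $R$. The functor $M\otimes_R -$ sends the simples $R/(x-\lambda)$ to pairwise nonisomorphic indecomposables $M_\lambda$, and these preserve indecomposability and reflect isomorphism. This gives infinitely many indecomposables $M_\lambda$, indexed by a cofinite set of $\lambda\in k$, all of one fixed dimension. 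Finitely many components then force infinitely many $M_\lambda$ into a single component $\mathcal C$.

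Next I would use semilinear twists. If $A$ is defined over a subfield $k_0$ (namely the prime field adjoined with structure constants and the coefficients of $M$ and $f$), then every $\sigma\in\Aut(k/k_0)$ acts on $\modcat A$ by twisting. This action is an autoequivalence up to semilinearity, so it permutes the AR-components and sends $M_\lambda$ to $M_{\sigma(\lambda)}$. The goal is a contradiction of the following kind. A component containing modules of a fixed dimension in infinite number must be non-regular-finite; by Auslander's theorem, a component with bounded lengths is the whole quiver of a finite-type connected algebra, and that is impossible here. Using the orbit of $\lambda$ under automorphisms, which is infinite for transcendental $\lambda$, I would aim to show that the components containing the $M_\lambda$ are permuted with infinite orbits, or else that a single component contains infinitely many modules of equal dimension. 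The latter is ruled out, because in a component the number of modules of bounded length is finite unless the component is a homogeneous-tube-type situation (Crawley-Boevey's argument in the tame case). I expect this step, making the twisting argument produce infinitely many distinct components, to be the main obstacle. I would first try to show that each component contains only finitely many modules of a given dimension, using that the dimension vectors along a component are controlled by the additive mesh relations from a finite set of starting modules.

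\emph{Descent.} For perfect $k$, the extension $\kkbar/k$ is separable, so $A\otimes_k\kkbar$ is representation-infinite whenever $A$ is; this holds because separable base change preserves finite type through the relation between indecomposables over $A$ and direct summands of restrictions. I then prove the transfer statement: if $\Gamma_A$ has finitely many components, then so does $\Gamma_{A\otimes\kkbar}$. The key facts are that every indecomposable $A_{\kkbar}$-module is defined over a finite Galois extension $L/k$, and that for separable $L/k$ the functor $-\otimes_k L$ sends almost split sequences to direct sums of almost split sequences. Hence each component of $\Gamma_A$ yields finitely many components of $\Gamma_{A_L}$, namely the Galois conjugates of the summands. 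Passing to $\kkbar$ requires a uniformity argument showing that the components do not split further in larger extensions. With this in hand, the algebraically closed case gives the contradiction.
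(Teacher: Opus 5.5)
\textbf{The gap is in the algebraically closed case.} Your descent outline is broadly the paper's, but after you reduce to infinitely many $M_\lambda$ lying in one component $\mathcal C$, none of the routes you list gives a contradiction.

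\emph{Twisting.} Twisting by $\sigma$ permutes components and sends $M_\lambda$ to $M_{\sigma(\lambda)}$. However, nothing prevents $\Phi_\sigma$ from stabilizing $\mathcal C$ and moving $M_\lambda$ inside it. An infinite $\sigma$-orbit of $\lambda$ then only produces more modules of one dimension in $\mathcal C$, which is exactly what you are trying to exclude. Moreover, the case that needs an argument is a countable field such as $\overline{\mathbb Q}$ or $\overline{\mathbb F}_p$. Over an uncountable $k$ there are uncountably many $M_\lambda$, while each component of the locally finite quiver $\Gamma_A$ is countable, so cardinality already finishes. In $\overline{\mathbb Q}$ and $\overline{\mathbb F}_p$ every element is algebraic over the prime field, so every automorphism orbit is finite and your transcendental $\lambda$ does not exist.

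\emph{Finiteness per dimension.} Your fallback, that a component contains only finitely many modules of a given dimension, is not available:
\begin{itemize}
\item Auslander's theorem concerns components of bounded length, which $\mathcal C$ need not be.
\item Crawley-Boevey's homogeneous-tube argument is specific to tame algebras.
\item Mesh additivity gives no such finiteness.
\item The statement is false in general. Liu and Schulz constructed algebras with an infinite $\tau$-orbit of modules of bounded length, and a $\tau$-orbit lies in a single component.
\end{itemize}

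\textbf{The missing idea is a quantitative invariant of components, built from finite orbits of prime length rather than infinite ones.}
\begin{itemize}
\item From $\dim_k\tau_A^{\pm1}X\le d^2\dim_kX$ one gets $\sum_{X\to Y}\dim_kY\le C_A\dim_kX$ and its dual, with $C_A=1+d^2$.
\item Let $\Phi$ be a dimension-preserving additive autoequivalence and $X\to Y$ an arrow with finite orbit lengths $m,n$, and $g=\gcd(m,n)$. The powers $\Phi^{jm}$ fix $X$ and carry $Y$ to $n/g$ pairwise nonisomorphic neighbours of $X$, all of dimension $\dim_kY$; symmetrically for $Y$. Hence $\frac{m}{g}\frac{n}{g}\le C_A^2$.
\item Consequently, for every prime $\ell>C_A^2$, $v_\ell(o_\Phi(X))$ is constant on each component.
\item For finitely generated $F\subseteq k$ and large primes $\ell$, there exist $\lambda$ and $\sigma\in\Aut_F(k)$ such that the $\sigma$-orbit of $\lambda$ has length exactly $\ell$. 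In characteristic $p$ this uses $F\mathbb F_{q^\ell}$. In characteristic $0$ it uses a cyclic degree-$\ell$ subfield of $\mathbb Q(\zeta_{\ell^2})$ together with regularity of $F/F_0$.
\item Choose recursively distinct primes $\ell_i$, parameters $\lambda_i$, and $\sigma_i$ fixing $F(\lambda_1,\dots,\lambda_{i-1})$. Then $o_{\Phi_{\sigma_i}}(M_{\lambda_i})=\ell_i$ while $o_{\Phi_{\sigma_i}}(M_{\lambda_j})=1$ for $j<i$. So the $M_{\lambda_i}$ lie in pairwise distinct components.
\end{itemize}

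\textbf{On the descent.} The ``uniformity'' problem you flag disappears if you work directly over $K=\kkbar$ instead of through finite $L$.
\begin{itemize}
\item Perfectness gives $\rad_{A_K}(U_K,V_K)=K\otimes_k\rad_A(U,V)$ and the same for $\rad^2$. Hence $K\otimes_k\Irr_A(U,V)\simeq\Irr_{A_K}(U_K,V_K)$.
\item Every indecomposable $A_K$-module is a summand of some $X_K$.
\item $\Gal(K/k)$ acts transitively on the indecomposable summands of $X_K$.
\end{itemize}
Together these let you lift any unoriented path in $\Gamma_A$ starting from a prescribed summand. Therefore finitely many components of $\Gamma_A$ yield finitely many components of $\Gamma_{A_K}$.
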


The proof has two parts.  The first treats the case when the base field $k$ is algebraically
closed.  A representation embedding due to Bongartz \cite{Bongartz} and further developed in \cite{BPS}, from a localization of
a polynomial ring, supplies a one-parameter family of pairwise nonisomorphic
indecomposable $A$-modules of one fixed dimension.  We then let field
automorphisms act on this family through semilinear twists. On the other hand, a uniform local estimate in the Auslander--Reiten quiver then shows that, for every sufficiently large prime $\ell$, the $\ell$-adic valuation of the orbit length is constant on each connected component. By choosing field
automorphisms with successively prescribed prime orbit lengths, we obtain
modules in pairwise distinct components, see \cref{sec:algebraically-closed}.

The argument over an algebraically closed field is only genuinely needed
when the field is countable.  Indeed, the representation embedding produces
$|k|$ pairwise nonisomorphic indecomposable modules of one fixed dimension,
apart from finitely many excluded parameters, cf. \cref{lem:polynomial-embedding} and \cref{eq:specialization-module}.  Since every connected component of
a locally finite graph is countable, an uncountable algebraically
closed field already gives infinitely many components by cardinality.  The
orbit argument mentioned above also covers this case and, more importantly, works over
countable algebraically closed fields such as $\overline{\mathbb Q}$ and
$\overline{\mathbb F}_p$.

Twisting by a field automorphism is a standard semilinear construction:
restriction of scalars gives an additive, generally non-$k$-linear,
equivalence and preserves vector-space dimension.  For the broader role of
Galois actions in the representation theory of Artin algebras and finite
Galois coverings, see~\cite{ARS-Galois};
for a recent systematic use of semilinear twists, see~\cite{BennettCrawleyBoevey,MalicSchroll}.  Our
argument uses the prime divisibility of orbit lengths under such twists to
produce an invariant of connected components of the AR-quiver.

The second part passes from a perfect field $k$ to its algebraic closure
$K=\kkbar$ and lifts the algebra $A$ to the scalar extension $A_K=K\otimes_kA$.  
Jensen and Lenzing proved that finite representation type is
preserved and reflected by MacLane-separable extensions
\cite[Theorem~3.3]{JensenLenzing}; see also \cite{Kasjan} for the behavior of
almost split sequences under such extensions and \cite{BHK25} for more recent applications on $\tau$-tilting theory.  We establish the particular
component-theoretic statement needed here, that is, if $\Gamma_A$ has only finitely
many components, then $\Gamma_{A_K}$ has only finitely many components.
This is not trivial, since scalar extension can split an indecomposable
module.  The proof uses base-change formulas for the radical and
its square, transitivity of $\Gal(K/k)$ on the indecomposable summands of an
extended indecomposable module, and a path-lifting argument starting at a
prescribed summand, see \cref{sec:base-change}.

We mention that the perfectness of $k$ ensures that $K/k$ is
separable and that finite-dimensional semisimple $k$-algebras remain
semisimple after scalar extension.  Without separability, nilpotents can
appear in the scalar extension of a semisimple quotient, and the radical
formulas established in \cref{subsec:radical-base-change}
 need not hold.

The paper is organized as follows.  In \cref{sec:preliminaries} we collect
the field-theoretic and representation-theoretic facts used later, and
record the representation-embedding theorem in the precise form needed.
\Cref{sec:local} establishes the local dimension estimate and the orbit-length
invariant.  The algebraically closed case is proved in
\cref{sec:algebraically-closed}.  Finally, \cref{sec:base-change} develops
the base-change and path-lifting arguments and proves \cref{thm:main}.

\section{Preliminaries}\label{sec:preliminaries}

This section fixes notation and records the standard facts used in the
proof.  The discussion is included partly to make clear where separability
is required and partly to make the conventions used in the later arguments explicit.

\subsection{Field extensions}\label{subsec:field-preliminaries}
We collect in this subsection the field-theoretic facts, including properties of separable and Galois extensions,
cyclotomic subfields, and the extension of automorphisms from a finite
Galois subextension to an algebraically closed overfield.

\subsubsection{Perfect fields and Galois extensions}\label{Perfect fields and Galois extensions}
Let $k$ be a field and let $\kkbar$ be an algebraic closure.  An algebraic
extension $K/k$ is \emph{separable} if the minimal polynomial over $k$ of
every element of $K$ has no repeated root.  The field $k$ is
\emph{perfect} if every algebraic extension of $k$ is separable.  Fields of
characteristic zero and finite fields are perfect.  In characteristic
$p>0$, perfectness is equivalent to surjectivity of the Frobenius map
$x\mapsto x^p$.

An algebraic extension $K/k$ is \emph{normal} if every irreducible polynomial
over $k$ that has a root in $K$ splits completely in $K$; equivalently, if every $k$-embedding of $K$ into $\kkbar$ maps $K$ to itself.  
 We say that $K/k$ is a \emph{Galois
extension} if it is separable and normal; equivalently, $k$ is the fixed field of $k$-automorphisms $\Aut_k(K)$ of $K$. In this case
we write $G=\Gal(K/k)$ for the automorphism group $\Aut_k(K)$, called the
\emph{Galois group} of $K/k$.
In particular, if $k$ is perfect and $K=\kkbar$, then $K/k$ is a
Galois extension and $K^{\Gal(K/k)}=k$.

The theorem of Jensen and Lenzing that will be used at the
end of the paper states that, for a MacLane-separable extension $K/k$ and a
finite-dimensional $k$-algebra $A$,
\[
A\text{ is of finite representation type}
\quad\Longleftrightarrow\quad
K\otimes_k A\text{ is of finite representation type};
\]
see \cite[Theorem~3.3]{JensenLenzing}.
Note that 
for algebraic field extensions, MacLane separability is equivalent to ordinary separability.

\subsubsection{Linear disjointness and regular extensions}\label{Linear disjointness and regular extensions}
Let $K/k$ and $L/k$ be subextensions of a common overfield.  They are
\emph{linearly disjoint over $k$} if the multiplication map
$$
K\otimes_k L\longrightarrow KL,
\qquad x\otimes y\longmapsto xy,
$$
is injective. Equivalently, every finite subset of $K$ that is linearly
independent over $k$ remains linearly independent over $L$ in $KL$; the
analogous statement holds with $K$ and $L$ interchanged.
If $L/k$ is finite Galois and $K$ and $L$ are linearly disjoint over $k$,
then $KL/K$ is Galois and restriction gives an isomorphism
\begin{equation}\label{eq:galois}
\Gal(KL/K)\simeq \Gal(L/k).
\end{equation}

A field extension $K/k$ is called \emph{regular} if it is
separable and $k$ is relatively algebraically closed in $K$, that is, every element of $K$ algebraic over
$k$ already lies in $k$.  A regular extension is
linearly disjoint from every algebraic extension of the ground field.  We
will use the following standard consequence.  Let $K$ be finitely generated
over its prime field $F$, and let $F^{alg}_K$ be the relative algebraic closure of $F$ in $K$, that is,
$$
F^{alg}_K=\{x\in K\mid x\text{ is algebraic over }F\}.
$$
Then $K/F^{alg}_K$ is regular.  In characteristic $p>0$,
$F^{alg}_K$ is a finite field; in characteristic zero, $F^{alg}_K$ is a number field.
These facts may be found in
\cite[Chapters~V and VI]{Lang}.

\subsubsection{Prime-degree cyclic extensions}\label{Prime-degree cyclic extensions}
If $K/k$ is cyclic Galois of prime degree $\ell$ and $\rho$ is a generator
of $\Gal(K/k)$, then every $\lambda\in K\setminus k$ has a $\rho$-orbit of
length exactly $\ell$.  In positive characteristic we shall obtain such an
extension by adjoining a finite field extension.  In characteristic zero we
use the following cyclotomic fact.  For an odd prime $\ell$,
$$
\Gal\bigl(\mathbb Q(\zeta_{\ell^2})/\mathbb Q\bigr)
\simeq (\mathbb Z/\ell^2\mathbb Z)^\times
$$
is cyclic of order $\ell(\ell-1)$.  Hence
$\mathbb Q(\zeta_{\ell^2})$ contains a cyclic subextension of degree
$\ell$ over $\mathbb Q$; see \cite[Chapter~2]{Washington}.

\subsubsection{Extension of automorphisms}\label{Extension of automorphisms}
Let $F\subseteq k$ with $k$ algebraically closed, and let
$F^\mathrm{alg}_k$ be the algebraic closure of $F$ contained in $k$.  If $E/F$ is a finite
Galois subextension of $F^\mathrm{alg}_k/F$, every element of
$\Gal(E/F)$ extends to an element of $\Aut_F(k)$.  Indeed, first extend the
automorphism to $F^\mathrm{alg}_k$, choose a transcendence basis
$\mathcal T$ of $k/F^\mathrm{alg}_k$ and fix it pointwise, and then extend
to the algebraic closure $k$ of $F^\mathrm{alg}_k(\mathcal T)$.  We shall
use this construction in \cref{lem:prime-orbits}; the extended automorphism
has the same orbit on every element of $E$ as the original automorphism.

\subsection{Modules and Auslander--Reiten quivers}
\label{subsec:representation-preliminaries}

Let $A$ be a finite-dimensional algebra over a field $k$, and denote by
$\modcat A$ the category of finite-dimensional left $A$-modules, which is a Hom-finite Krull--Schmidt category: every object is a finite
direct sum of indecomposable objects, uniquely up to permutation and
isomorphism, and the endomorphism algebra of an indecomposable object is
local.

\subsubsection{The categorical radical}
For $U,V\in\modcat A$, the \emph{radical}
$\rad_A(U,V)$ consists of the morphisms $f:U\to V$ such that
$1_U-gf$ is invertible for every $g:V\to U$.  Its square is
\[
\rad_A^2(U,V)
 =\sum_{W\in\modcat A}
 \rad_A(W,V)\,\rad_A(U,W),
\]
where the sum means the subspace spanned by all such compositions.  We set
\[
\Irr_A(U,V)=\rad_A(U,V)/\rad_A^2(U,V).
\]
If $U$ and $V$ are indecomposable, a morphism $U\to V$ is \emph{irreducible} if
and only if it belongs to $\rad_A(U,V)$ but not to
$\rad_A^2(U,V)$.

For later use, if $E=\End_A(U\oplus V)$ and $e_U,e_V\in E$ are the
idempotents associated with the two summands, then
\begin{equation}\label{eq:radical-block}
\rad_A(U,V)=e_VJ(E)e_U,
\end{equation}
where $J(E)=\rad_A(U\oplus V,U\oplus V)$ is the \emph{Jacobson radical} of $E$.
This is the usual block description of the radical in a Krull--Schmidt
category; see \cite[Chapter~I, Section~2]{ARS}.

\subsubsection{Auslander--Reiten quiver}
The Auslander--Reiten quiver $\Gamma_A$ is the valued quiver whose vertices are the isomorphism classes of indecomposable $A$-modules. For indecomposable modules $U$ and $V$, there is an arrow from $[U]$ to $[V]$ precisely when $\Irr_A(U,V)\neq 0$. The valuation records the dimensions of $\Irr_A(U,V)$ over the corresponding residue division algebras, namely
$$
  D_V = \End_A(V)/\rad\End_A(V)
  \quad\text{and}\quad
  D_U = \End_A(U)/\rad\End_A(U).
$$

Connectedness of $\Gamma_A$ always refers to the
underlying unoriented graph, that is, the graph obtained by ignoring the orientations and the valuations of the arrows in $\Gamma_A$. 
Since only connectedness is relevant below, we suppress valuations and arrow multiplicities and retain only the existence of an arrow.
We write $U\longrightarrow V$
to mean that $\Gamma_A$ has an arrow from $[U]$ to $[V]$, and in sums indexed
by arrows, each isomorphism class is counted once, irrespective of the
valuation of arrows, see for example \cref{eq:local-estimate}.

\subsubsection{Projective covers and translations} 

We denote by $P(X)\twoheadrightarrow X$ the projective cover for a module $X\in\modcat A$.  A minimal projective
presentation of $X$ is an exact sequence of the form
\[
P_1\longrightarrow P_0\longrightarrow X\longrightarrow0,\]
where $P_0=P(X)$ and $P_1=P(\Omega X)$.
Applying $\Hom_A(-,A)$ gives the transpose
\begin{equation}\label{eq:tr}
\Tr_AX
 =\operatorname{coker}\bigl(
 \Hom_A(P_0,A)\longrightarrow\Hom_A(P_1,A)
 \bigr)
\end{equation}
which is a right $A$-module. 
If $X$ is
indecomposable and nonprojective, its \emph{Auslander--Reiten translation} is $\tau_AX=D\Tr_AX$, where $D=\Hom_k(-,k)$ is the usual $k$-duality between $\modcat A$ and $\modcat A^{\op}$.  
In this case, there is an \emph{almost split sequence}
\[
0\longrightarrow\tau_AX\longrightarrow E^-(X)
\longrightarrow X\longrightarrow0
\]
ending at $X$, and every immediate predecessor of $X$ in the AR-quiver $\Gamma_A$ occurs as an indecomposable direct
summand of $E^-(X)$.

Dually, $\tau_A^{-1}X$ is defined for a noninjective indecomposable $X$, and
\begin{equation}\label{eq:inverse-translate-duality}
D(\tau_A^{-1}X)\simeq \tau_{A^{\op}}(DX),
\end{equation}
and there is an almost split
sequence starting at $X$,
\[
0\longrightarrow X\longrightarrow E^+(X)
\longrightarrow\tau_A^{-1}X\longrightarrow0,
\]
and every immediate successor of $X$ in $\Gamma_A$ occurs as an indecomposable direct
summand of $E^+(X)$.  

If $X$ is indecomposable projective, the inclusion
$\rad X\hookrightarrow X$ is \emph{minimal right almost split}, while
if $X$ is indecomposable injective, the quotient
$X\twoheadrightarrow X/\soc X$ is \emph{minimal left almost split}.
These statements are standard; for details, we refer to
\cite[Chapters~IV and V]{ARS}.

\subsubsection{Additive equivalences and irreducible morphisms}\label{subsubse:add-equ}

An \emph{additive autoequivalence} of $\modcat A$ is an autoequivalence
$\Phi$ that is additive on morphisms, or equivalently, preserves finite
direct sums.  Such an equivalence automatically preserves indecomposable
objects, split morphisms, and factorizations, and therefore preserves
irreducible morphisms.  It follows that $\Phi$ acts on the underlying graph
of $\Gamma_A$. Note that we do not assume that $\Phi$ is $k$-linear.

\subsection{Representation embeddings}\label{subsec:representation-embeddings}
The representation embedding used below originates in work on the Brauer--Thrall conjectures. For historical background, we refer the reader to \cite{Roiter,Ringel,Bautista85,Bongartz85}. More recently, Bongartz strengthened the method to a tensor
representation embedding from the category of $k[T]$-modules
\cite[Theorem~6 (i)]{Bongartz}.
We rely on a version of Bautista, P\'erez, and Salmer\'on \cite{BPS},
which yields a representation embedding from a localized polynomial ring
$k[T,h(T)^{-1}]$ for a nonzero polynomial $h(T)$.

For $k$-algebras $R$ and $A$, a \emph{representation embedding} \(\mathcal F:\modcat R\longrightarrow\modcat A\) is an exact $k$-linear functor which sends indecomposable modules to indecomposable modules and reflects isomorphism classes. The last condition means that, for $R$-modules $X$ and $Y$, $\mathcal F(X)\simeq \mathcal F(Y)$ implies $X\simeq Y$.

The representation embeddings used here are tensor functors.  If
${}_AM_R$ is an $A$--$R$-bimodule and $M_R$ is finitely generated
projective, then
\[
M\otimes_R-:\modcat R\longrightarrow\modcat A
\]
is exact and sends finite-dimensional modules to finite-dimensional
modules.  
We will use the following result, which is proved in \cite{BPS} for a basic algebra $A$, with the right bimodule $M$ initially known to be finitely generated projective. For the convenience of the readers, we include a proof here. In particular, we explain how the cited result yields the statement
below for an arbitrary finite-dimensional algebra $A$, not necessarily
basic, and why the resulting bimodule is free of finite rank as a right
$R$-module.

\begin{lemma}\label{lem:polynomial-embedding}
Let $k$ be algebraically closed, and let $A$ be a representation-infinite
finite-dimensional $k$-algebra. Then there exist a nonzero polynomial
$h(T)\in k[T]$, the localization $R=k[T,h(T)^{-1}]$, and an $A$--$R$-bimodule $M$, free of finite rank as a right $R$-module,
such that
\[
H=M\otimes_R-:\modcat R\longrightarrow\modcat A
\]
is a representation embedding.
\end{lemma}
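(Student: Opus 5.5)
We will derive the lemma from the result of Bautista, P\'erez and Salmer\'on \cite{BPS}, using Morita equivalence for the non-basic case and Quillen--Suslin for freeness. The cited theorem states: for a basic, finite-dimensional, representation-infinite algebra $A_0$ over an algebraically closed field $k$, there are a nonzero $h(T)\in k[T]$, $R=k[T,h(T)^{-1}]$, and an $A_0$--$R$-bimodule $M_0$, finitely generated projective as a right $R$-module, such that $M_0\otimes_R-$ is a representation embedding.

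\textbf{Step 1 (reduction to the basic case).} Let $A$ be arbitrary, and let $e\in A$ be an idempotent with $A_0=eAe$ the basic algebra of $A$, so $AeA=A$. Representation type is a Morita invariant, so $A_0$ is representation-infinite. The Morita equivalence $F=Ae\otimes_{A_0}-:\modcat A_0\to\modcat A$ is exact and $k$-linear, preserves indecomposability and reflects isomorphism classes. Put $M=Ae\otimes_{A_0}M_0$, an $A$--$R$-bimodule. By associativity of the tensor product,
\[
M\otimes_R-\;\simeq\;F\circ(M_0\otimes_R-),
\]
and a composite of such functors is again exact, $k$-linear, preserves indecomposables and reflects isomorphism. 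Hence $M\otimes_R-$ is a representation embedding.

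\textbf{Step 2 ($M_R$ is finitely generated projective).} As a right $R$-module, $M\simeq Ae\otimes_{A_0}M_0$, and ${}_{A_0}(eA)$ is a progenerator; so is $Ae$ as a left $A_0$-module, i.e.\ $A_0$-projective. Thus $Ae$ is a direct summand of $A_0^{\,n}$ as a left $A_0$-module. It follows that $M$ is a direct summand of $M_0^{\,n}$ as a right $R$-module, so $M$ is finitely generated projective over $R$.

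\textbf{Step 3 (freeness).} This is the substantive addition to \cite{BPS}. The ring $R=k[T,h^{-1}]$ is a localization of the principal ideal domain $k[T]$, hence itself a PID. A finitely generated projective module over a PID is torsion-free and finitely generated, hence free. So $M_R\simeq R^m$ for some finite $m$, as required. (We could avoid the PID argument by invoking Quillen--Suslin, but it is unnecessary here.)

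\textbf{Main obstacle.} The real work is confirming that the precise hypotheses and conclusion of the cited \cite{BPS} theorem match what is used above. In particular, we must check that \cite{BPS} delivers a localization at a single polynomial and a bimodule that is finitely generated projective on the right. If their statement is phrased via a bimodule over a rational algebra $k[T]_{h}$ with additional structure, we would extract $M_0$ from it and verify exactness and the preservation and reflection properties directly. The remaining steps are routine.
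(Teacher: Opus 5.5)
Your proposal is correct and follows essentially the same route as the paper: pass to the basic algebra through a Morita bimodule, apply \cite{BPS} there, show that $M_R$ is a direct summand of finitely many copies of the BPS bimodule and hence finitely generated projective, and conclude freeness because $k[T,h(T)^{-1}]$ is a PID. One small slip in Step 2: what you need, and what holds because $AeA=A$, is that $Ae$ is finitely generated projective as a \emph{right} $A_0$-module, since that is the side over which you tensor with $M_0$; as written you say ``left''.
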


\begin{proof} Choose a basic algebra $B$ Morita equivalent to $A$.  Since $k$ is algebraically closed, $B/J(B)$ is a finite product of copies of $k$. By \cite[Remark~4.10]{BPS}, there exist a nonzero polynomial
$h(T)$, a $B$--$R$-bimodule $Z$ which is finitely generated projective as a
right $R=k[T,h(T)^{-1}]$-module, and a representation embedding
\[
Z\otimes_R-:\modcat R\longrightarrow\modcat B.
\]

Let ${}_AP_B$ be a Morita bimodule such that
$P\otimes_B-:\modcat B\to\modcat A$ is an equivalence, and set
$M=P\otimes_BZ$.  Since $P_B$ is finitely generated projective, it is a
direct summand of $B^s$ for some $s$, that is, there is a right $B$-module $Q$ such that $P\oplus Q\simeq B^s$.
Tensoring this isomorphism with $Z$ over $B$ gives
\[
(P\otimes_BZ)\oplus(Q\otimes_BZ)\simeq Z^s
\]
as right $R$-modules.
Hence $M$ is a finitely
generated projective right $R$-module.  The ring $R$ is a localization of the principal
ideal domain $k[T]$, so it is again a principal ideal domain.  Therefore
$M_R$ is free of finite rank.  Finally,
\[
M\otimes_R-
 \simeq P\otimes_B\bigl(Z\otimes_R-\bigr),
\]
so the tensor functor defined by $M$ is the composite of a representation
embedding with a Morita equivalence, which is still a representation embedding.
\end{proof}

For $\lambda\in k$ with $h(\lambda)\neq0$, let
\begin{equation}\label{eq:specialization-module}
S_\lambda=R/(T-\lambda)R.
\end{equation}
This is a one-dimensional $R$-module on which $T$ acts as multiplication by
$\lambda$, and
\[
S_\lambda\simeq S_\mu\quad\Longleftrightarrow\quad\lambda=\mu.
\]
If $M_R$ has rank $t$, then $H(S_\lambda)$ has $k$-dimension $t$.
Consequently, \cref{lem:polynomial-embedding} gives a one-parameter family
of pairwise nonisomorphic indecomposable $A$-modules of one fixed dimension,
with the roots of $h$ excluded.

\section{Local estimates and orbit lengths}\label{sec:local}

Let $k$ be a field and let $A$ be a finite-dimensional $k$-algebra.  Put
\begin{equation*}
d=\dim_kA,
\qquad
C_A=1+d^2.
\end{equation*}
Recall that if $X$ and $Y$ are indecomposable $A$-modules, we write $X\longrightarrow Y$ when
$\Gamma_A$ has an arrow from $[X]$ to $[Y]$.  

\subsection{A uniform local estimate}\label{subsec:local-estimate}

\begin{lemma}\label{lem:local-estimate}
For every indecomposable $A$-module $X$,
\begin{equation}\label{eq:local-estimate}
\sum_{X\longrightarrow Y}\dim_kY\leq C_A\dim_kX,
\qquad
\sum_{Y\longrightarrow X}\dim_kY\leq C_A\dim_kX,
\end{equation}
where in the sums each indecomposable isomorphism class of $Y$ is counted once.
\end{lemma}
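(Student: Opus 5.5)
We prove the first inequality by cases on whether $X$ is injective; the second follows by the dual argument, or by applying the first to $A^{\op}$ and $DX$ via \cref{eq:inverse-translate-duality}, since $\dim_k A^{\op}=d$ and $D$ preserves dimension and reverses arrows.

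Every immediate successor $Y$ of $X$ is an indecomposable direct summand of the middle term of a minimal left almost split map starting at $X$. Since each isomorphism class is counted once, the sum $\sum_{X\to Y}\dim_k Y$ is at most the dimension of that middle term.

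If $X$ is injective, the minimal left almost split map is $X\twoheadrightarrow X/\soc X$. Hence the sum is at most $\dim_k X\le C_A\dim_k X$.

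If $X$ is not injective, we use the almost split sequence $0\to X\to E^+(X)\to\tau_A^{-1}X\to 0$. It gives
\[
\sum_{X\to Y}\dim_k Y\le \dim_k E^+(X)=\dim_k X+\dim_k\tau_A^{-1}X .
\]
So it suffices to prove $\dim_k\tau_A^{-1}X\le d^2\dim_k X$.

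By \cref{eq:inverse-translate-duality}, $\dim_k\tau_A^{-1}X=\dim_k\tau_{A^{\op}}(DX)=\dim_k\Tr_{A^{\op}}(DX)$. We bound the transpose using \cref{eq:tr}, applied over $A^{\op}$ to $Z=DX$, with $\dim_k Z=\dim_k X=:n$. Let $P_1\to P_0\to Z\to 0$ be a minimal projective presentation.

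For $P_0$: the projective cover $P_0=P(Z)$ is generated by $\dim_k(Z/\rad Z)\le n$ elements, so $P_0$ is a summand of $A^{n}$ (interpreted over $A^{\op}$) and $\dim_k P_0\le nd$.

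For $P_1$: the syzygy $\Omega Z\subseteq P_0$ has dimension at most $nd$, so the same argument gives $\dim_k P_1\le nd\cdot d=nd^2$.

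Since $\Tr Z$ is a quotient of $\Hom(P_1,A)$, and $\dim_k\Hom(P_1,A)=\dim_k P_1$ (true for projectives, as $\Hom(A,A)\simeq A$ on free summands), we get $\dim_k\Tr Z\le nd^2$. Combining,
\[
\sum_{X\to Y}\dim_k Y\le n+nd^2=C_A\dim_k X .
\]

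The only step needing care is this dimension count for the transpose. The point to verify is that the bound for $P_1$ comes from the number of generators of $\Omega Z$, which is at most $\dim_k\Omega Z$; this is where $d^2$ rather than $d$ appears. A sharper bound is not needed for the lemma.
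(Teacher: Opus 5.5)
Your proof follows the paper's argument essentially step by step. The steps are: $P_0$ is a summand of $A^{n}$, $\Omega Z$ has dimension at most $nd$, $P_1$ is a summand of $A^{nd}$, the transpose is a quotient of $\Hom(P_1,A)$, duality \eqref{eq:inverse-translate-duality} handles $\tau_A^{-1}$, and the injective and noninjective cases are treated separately. Getting the predecessor bound by applying the successor bound to $DX$ over $A^{\op}$ is a harmless variant of the paper's dual argument.

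There is one false step. You claim that $\dim_k\Hom(P_1,A)=\dim_kP_1$ for a projective $P_1$, but this holds only for free modules. In general $\Hom_A(Ae,A)\cong eA$, and $\dim_k eA$ need not equal $\dim_k Ae$. For example, take the path algebra of $1\to 2$. Its simple projective $S$ maps nontrivially both to itself and into the socle of the other indecomposable projective. Hence $\dim_k\Hom_A(S,A)=2>1=\dim_kS$. So even the inequality $\dim_k\Hom(P_1,A)\le\dim_kP_1$ fails, and your bound on $\dim_k\Tr Z$ does not follow from $\dim_kP_1\le nd^2$ as written.

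The fix is what the paper does. Since $P_1$ is a direct summand of the free module of rank $\dim_k\Omega Z\le nd$, the space $\Hom(P_1,A)$ is a direct summand of $\Hom(A^{nd},A)\cong A^{nd}$. Hence $\dim_k\Hom(P_1,A)\le nd^2$. With this replacement your proof is complete.
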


\begin{proof}
Let $P(X)\twoheadrightarrow X$ be a projective cover of $X$.  A $k$-basis of $X$ gives an epimorphism
$A^{\dim_kX}\twoheadrightarrow X$.  Since a projective cover is a direct
summand of every projective module admitting an epimorphism onto the same
module, we have $\dim_kP(X)\leq d\dim_kX$.

Choose a minimal projective presentation $P_1\longrightarrow P_0\longrightarrow X\longrightarrow0$, where $P_0=P(X)$ and $P_1=P(\Omega X)$.  Hence
\[
\dim_k\Omega X\leq\dim_kP_0\leq d\dim_kX.
\]

Furthermore, since $P_1$ is a direct
summand of $A^{\dim_k\Omega X}$,
\[
\dim_k\Hom_A(P_1,A)\leq\dim_k\Hom_A\bigl(A^{\dim_k\Omega X},A\bigr)
 = d\dim_k\Omega X
 \leq d^2\dim_kX.
\]
If $X$ is nonprojective, then $\Tr_AX$ is a quotient of
$\Hom_A(P_1,A)$, see \eqref{eq:tr}.  Thus
\begin{equation}\label{eq:tau-bound}
\dim_k\tau_AX
 =\dim_kD\Tr_AX
 \leq d^2\dim_kX.
\end{equation}
If $X$ is noninjective, apply the same estimate over $A^{\op}$ to $DX$.
Using \eqref{eq:inverse-translate-duality}, we obtain
\begin{equation}\label{eq:tau-inverse-bound}
\dim_k\tau_A^{-1}X\leq d^2\dim_kX.
\end{equation}

Suppose first that $X$ is noninjective.  In the almost split sequence
\[
0\longrightarrow X\longrightarrow E^+(X)
\longrightarrow\tau_A^{-1}X\longrightarrow0,
\]
every indecomposable $Y$ with $X\longrightarrow Y$ occurs as a direct
summand of $E^+(X)$. Thus the sum in \eqref{eq:local-estimate} is bounded by the dimension of the full middle term.
Using \eqref{eq:tau-inverse-bound}, we get
\[
\sum_{X\longrightarrow Y}\dim_kY
 \leq\dim_kE^+(X)
 =\dim_kX+\dim_k\tau_A^{-1}X
 \leq C_A\dim_kX.
\]
If $X$ is injective, the canonical epimorphism
$X\twoheadrightarrow X/\soc X$ is minimal left almost split, with zero
target if $X$ is simple.  Every immediate successor of $X$ is therefore a
direct summand of $X/\soc X$, and the same inequality follows.

The proof of the predecessor estimate is dual.  If $X$ is nonprojective,
use the almost split sequence
\[
0\longrightarrow\tau_AX\longrightarrow E^-(X)
\longrightarrow X\longrightarrow0
\]
and \eqref{eq:tau-bound}.  If $X$ is projective, use the minimal right almost
split inclusion $\rad X\hookrightarrow X$.  This gives the second inequality
in \eqref{eq:local-estimate}.
\end{proof}

In particular, every vertex of $\Gamma_A$ has only finitely many neighbors,
so $\Gamma_A$ is locally finite.

\subsection{Orbit lengths under additive autoequivalences}
\label{subsec:orbit-lengths}

Let $\Phi:\modcat A\longrightarrow\modcat A$ be an additive autoequivalence, not assumed to be $k$-linear, and suppose
that
\[
\dim_k\Phi(X)=\dim_kX
\quad\text{for every }X\in\modcat A.
\]
As recalled in \cref{subsubse:add-equ}, $\Phi$ preserves
irreducible morphisms and acts on the underlying graph of $\Gamma_A$.  If
the $\Phi$-orbit of an indecomposable module $X$ is finite, set
\[
o_\Phi(X)=\min\{m\geq1\mid\Phi^mX\simeq X\}.
\]

\begin{lemma}\label{lem:adjacent-orbits}
Suppose that $X\longrightarrow Y$ in $\Gamma_A$ and that both $\Phi$-orbits are finite. Set $m=o_\Phi(X)$, $n=o_\Phi(Y)$ and $g=\gcd(m,n)$. Then
\begin{equation}\label{eq:orbit-ratio}
\frac{m}{g}\frac{n}{g}\leq C_A^2.
\end{equation}
\end{lemma}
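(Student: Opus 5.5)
Our plan is to count, inside a single translate of the arrow $X\to Y$, how many distinct successors of $X$ (and predecessors of $Y$) are forced by the $\Phi$-action, and then apply \cref{lem:local-estimate} twice. Since $\Phi$ is an additive autoequivalence preserving irreducible morphisms (\cref{subsubse:add-equ}), $X\to Y$ implies $\Phi^iX\to\Phi^iY$ for all $i$. The orbit $\{\Phi^iX\}$ has $m$ distinct members and $\{\Phi^jY\}$ has $n$. All members of an orbit have the same dimension, namely $\dim_kX$ respectively $\dim_kY$, because $\Phi$ preserves $k$-dimension.

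The first step is to show that $X$ has at least $n/g$ pairwise nonisomorphic successors in the $\Phi$-orbit of $Y$. Applying $\Phi^{m}$ repeatedly gives $X=\Phi^{am}X\to\Phi^{am}Y$ for all $a\geq0$. The classes $\Phi^{am}Y$ depend only on $am \bmod n$. The residues $am \bmod n$ form the subgroup of $\mathbb Z/n$ generated by $m$, which is $g\mathbb Z/n$ and has $n/g$ elements. Hence $X$ has at least $n/g$ distinct immediate successors, each of dimension $\dim_kY$. By the first inequality of \cref{lem:local-estimate},
\[
\frac{n}{g}\dim_kY\leq C_A\dim_kX.
\]
Symmetrically, $\Phi^{bn}X\to\Phi^{bn}Y=Y$ for all $b\geq0$ gives $m/g$ distinct predecessors of $Y$, each of dimension $\dim_kX$. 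The second inequality of \cref{lem:local-estimate} then yields
\[
\frac{m}{g}\dim_kX\leq C_A\dim_kY.
\]

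Multiplying the two inequalities and cancelling the positive quantity $\dim_kX\dim_kY$ gives $\frac{m}{g}\frac{n}{g}\leq C_A^2$, which is \eqref{eq:orbit-ratio}. The only point needing care is the counting step: we must check that distinct isomorphism classes $\Phi^{am}Y$ are genuinely distinct vertices adjacent to $X$. This holds because arrows in $\Gamma_A$ are counted per isomorphism class, as in the convention of \cref{lem:local-estimate}, and we use no finer information about valuations. No other obstacle is expected.
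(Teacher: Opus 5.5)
Your proposal is correct and follows essentially the same argument as the paper. Like the paper, you obtain $n/g$ pairwise nonisomorphic successors $\Phi^{am}Y$ of $X$ and $m/g$ pairwise nonisomorphic predecessors $\Phi^{bn}X$ of $Y$, then apply the two inequalities of \cref{lem:local-estimate} and multiply. The paper makes the arrows explicit by composing $\Phi^{jm}(a)$ with chosen isomorphisms $X\xrightarrow{\sim}\Phi^{jm}X$; your identification of vertices up to isomorphism accomplishes the same thing.
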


\begin{proof}
Fix an irreducible morphism $a:X\to Y$.  Since $\Phi^mX\simeq X$, for each
$j\in\mathbb Z$ choose an isomorphism
$u_j:X\xrightarrow{\sim}\Phi^{jm}X$.  Then
\[
\Phi^{jm}(a)u_j:X\longrightarrow\Phi^{jm}Y
\]
is irreducible.  The subgroup generated by $m$ in $\mathbb Z/n\mathbb Z$
has order $n/g$.  Hence the targets occurring above represent exactly
$n/g$ pairwise nonisomorphic modules, all of dimension $\dim_kY$.  The
first inequality of \cref{lem:local-estimate} gives
\begin{equation}\label{eq:first-orbit-bound}
\frac{n}{g}\dim_kY\leq C_A\dim_kX.
\end{equation}

Similarly, applying powers of $\Phi^n$ to $a$ and identifying the targets
with $Y$ produces $m/g$ pairwise nonisomorphic predecessors of $Y$, all of
dimension $\dim_kX$.  Therefore
\begin{equation}\label{eq:second-orbit-bound}
\frac{m}{g}\dim_kX\leq C_A\dim_kY.
\end{equation}
Multiplying \eqref{eq:first-orbit-bound} and
\eqref{eq:second-orbit-bound} proves \eqref{eq:orbit-ratio}.
\end{proof}

The preceding lemma also explains why adjacent vertices need not have equal
orbit length.  A power of $\Phi$ which fixes one endpoint may permute the
finitely many neighbors of that endpoint.

\begin{lemma}\label{lem:finite-orbits-propagate}
If a connected component $\mathcal C$ of $\Gamma_A$ contains one vertex
with finite $\Phi$-orbit, then every vertex of $\mathcal C$ has finite
$\Phi$-orbit.
\end{lemma}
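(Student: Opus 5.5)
Since $\mathcal C$ is connected, it suffices to show the following: if $X\longrightarrow Y$ or $Y\longrightarrow X$ in $\Gamma_A$ and $X$ has finite $\Phi$-orbit, then $Y$ also has finite $\Phi$-orbit. An induction along an unoriented path then reaches every vertex of $\mathcal C$. I treat the case $X\longrightarrow Y$; the case $Y\longrightarrow X$ is identical, with the second inequality of \cref{lem:local-estimate} replacing the first.

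Set $m=o_\Phi(X)$ and fix an irreducible morphism $a:X\to Y$. As in the proof of \cref{lem:adjacent-orbits}, for each $j\geq0$ choose an isomorphism $u_j:X\xrightarrow{\sim}\Phi^{jm}X$. By \cref{subsubse:add-equ}, $\Phi^{jm}$ is an additive autoequivalence, so it preserves irreducible morphisms. Hence $\Phi^{jm}(a)u_j:X\to\Phi^{jm}Y$ is irreducible, and $X\longrightarrow\Phi^{jm}Y$ for every $j\geq0$. Every $\Phi^{jm}Y$ is indecomposable, being the image of an indecomposable under an autoequivalence, and has dimension $\dim_kY$ by the standing assumption on $\Phi$. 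The first inequality of \cref{lem:local-estimate} counts each isomorphism class of successors once. Therefore the set $\{[\Phi^{jm}Y]\mid j\geq0\}$ has at most $C_A\dim_kX/\dim_kY$ elements, and in particular it is finite.

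By the pigeonhole principle there are $0\leq i<j$ with $\Phi^{im}Y\simeq\Phi^{jm}Y$. Since $\Phi$ is an equivalence, $\Phi^{im}$ reflects isomorphisms: apply a quasi-inverse $i$ times. Hence $Y\simeq\Phi^{(j-i)m}Y$ with $(j-i)m\geq1$, so the $\Phi$-orbit of $Y$ is finite. This completes the induction step.

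The only point needing care is this last one: finiteness of the set $\{\Phi^{jm}Y\}$ alone gives eventual periodicity, and invertibility of $\Phi$ on isomorphism classes is what makes the orbit of $Y$ itself periodic. Everything else follows directly from \cref{lem:local-estimate} and from the fact that $\Phi$ preserves irreducible morphisms and dimension.
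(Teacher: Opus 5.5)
Your proof is correct and follows essentially the same route as the paper: the modules $\Phi^{jm}Y$ are all neighbors of $X$, so local finiteness (from \cref{lem:local-estimate}) makes them finitely many up to isomorphism, and induction along an unoriented path finishes. The only cosmetic difference is how you close. You use pigeonhole and invertibility to get $\Phi^{N}Y\simeq Y$ for some $N\geq1$, whereas the paper writes the full $\Phi$-orbit as the finite union $\bigcup_{r=0}^{m-1}\Phi^r(\operatorname{Orb}_{\Phi^m}(Y))$.
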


\begin{proof}
Let $o_\Phi(X)=m$, and let $Y$ be adjacent to $X$.  Every
$\Phi^{jm}Y$ is again adjacent to $X$.  Since $X$ has only finitely many
neighbors, the $\Phi^m$-orbit of $Y$ is finite.  The full $\Phi$-orbit is
the finite union
\[
\bigcup_{r=0}^{m-1}
\Phi^r\bigl(\operatorname{Orb}_{\Phi^m}(Y)\bigr),
\]
and is therefore finite.  Induction along an unoriented path proves the
claim.
\end{proof}

For a prime $\ell$, let $v_\ell$ denote the usual $\ell$-adic valuation:
for a positive integer $n$, $v_\ell(n)$ is the largest nonnegative integer
$r$ such that $\ell^r\mid n$.
The following result is essential in the proof of \cref{thm:algebraically-closed}.
\begin{proposition}\label{prop:component-valuation}
Let $\ell>C_A^2$ be prime, and let $\mathcal C$ be a connected component of
$\Gamma_A$ containing a vertex with finite $\Phi$-orbit.  Then
\[
[X]\longmapsto v_\ell\bigl(o_\Phi(X)\bigr)
\]
is constant on $\mathcal C$.
\end{proposition}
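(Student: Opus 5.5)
By \cref{lem:finite-orbits-propagate}, every vertex of $\mathcal C$ has finite $\Phi$-orbit, so $o_\Phi$ is defined on all of $\mathcal C$. Since $\mathcal C$ is connected in the underlying unoriented graph, it suffices to show that $v_\ell(o_\Phi(X))=v_\ell(o_\Phi(Y))$ whenever $X\longrightarrow Y$ is an arrow in $\mathcal C$. Constancy on the whole component then follows by induction along an unoriented path between any two vertices.

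For the adjacent case, put $m=o_\Phi(X)$, $n=o_\Phi(Y)$ and $g=\gcd(m,n)$, and apply \cref{lem:adjacent-orbits} to get
\[
\frac{m}{g}\cdot\frac{n}{g}\leq C_A^2<\ell .
\]
Suppose $v_\ell(m)\neq v_\ell(n)$, say $v_\ell(m)>v_\ell(n)$ (the other case is symmetric). Then $v_\ell(g)=\min\{v_\ell(m),v_\ell(n)\}=v_\ell(n)$, so $v_\ell(m/g)=v_\ell(m)-v_\ell(n)\geq1$. Hence $\ell$ divides $m/g$, which forces $m/g\geq\ell$. Since $n/g\geq1$, the product $(m/g)(n/g)$ is then at least $\ell>C_A^2$, contradicting the displayed inequality. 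Therefore $v_\ell(m)=v_\ell(n)$.

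There is no real obstacle here: the essential work is already done in \cref{lem:adjacent-orbits} and \cref{lem:finite-orbits-propagate}, and the argument above is just the elementary observation that a difference in $\ell$-adic valuations forces an $\ell$-factor in one of the reduced ratios $m/g$ or $n/g$. The only point to be careful about is the hypothesis of \cref{lem:adjacent-orbits} that both orbits are finite. This is exactly what \cref{lem:finite-orbits-propagate} supplies for every pair of adjacent vertices in $\mathcal C$.
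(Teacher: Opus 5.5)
Your proof is correct and follows the paper's argument essentially verbatim. You use \cref{lem:finite-orbits-propagate} to make $o_\Phi$ defined on all of $\mathcal C$, then the bound of \cref{lem:adjacent-orbits}, and observe that a discrepancy in $\ell$-adic valuations forces $\ell$ to divide one of the coprime factors $m/g$, $n/g$; your computation $v_\ell(m/g)=v_\ell(m)-v_\ell(n)\geq1$ simply spells out the step the paper states in one line.
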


\begin{proof}
By \cref{lem:finite-orbits-propagate}, the function is defined on every
vertex of $\mathcal C$.  Let $X$ and $Y$ be adjacent and use the notation of
\cref{lem:adjacent-orbits}.  If $v_\ell(m)\neq v_\ell(n)$, then $\ell$
divides one of the relatively prime integers $m/g$ and $n/g$.  Hence
\[
\frac{m}{g}\frac{n}{g}\geq\ell>C_A^2,
\]
contrary to \eqref{eq:orbit-ratio}.  Thus the valuation is constant across
each edge, and hence on the whole component.
\end{proof}

\section{The algebraically closed case}\label{sec:algebraically-closed}
In this section, we prove \cref{thm:main} when the base field is algebraically closed.
Throughout this section, $k$ is algebraically closed and $A$ is a
representation-infinite finite-dimensional $k$-algebra. 

\subsection{Prime-order orbits over finitely generated subfields}

The core field-theoretic tool of this section is the following lemma, where Figure~\ref{fig:field-configurations} summarizes
the two field-theoretic constructions used in the proof.

\begin{lemma}\label{lem:prime-orbits}
Let $F\subseteq k$ be finitely generated over its prime field, and let
$f(T)\in F[T]$ be nonzero.  For every sufficiently large prime $\ell$ there
exist $\lambda\in k$ and $\sigma\in\Aut_F(k)$ such that the $\sigma$-orbit
of $\lambda$ has exactly $\ell$ elements and
\[
f\bigl(\sigma^r(\lambda)\bigr)\neq0
\qquad(r\in\mathbb Z).
\]
\end{lemma}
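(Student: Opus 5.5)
Let $F_0$ be the prime field and $E_0=(F_0)^{alg}_F$ the relative algebraic closure of $F_0$ in $F$. By \cref{Linear disjointness and regular extensions}, $E_0$ is a finite field or a number field, and $F/E_0$ is regular. The plan is to find a cyclic Galois extension $L/E_0$ of prime degree $\ell$ inside $k$, together with an element $\lambda\in L\setminus E_0$. We then transport a generator of $\Gal(L/E_0)$ to an automorphism of $k$ that fixes $F$ pointwise, while keeping the $\ell$ conjugates of $\lambda$ away from the finitely many roots of $f$.

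\emph{Step 1: constructing $L$.} In characteristic $p$, write $E_0=\mathbb F_q$ and let $L=\mathbb F_{q^\ell}\subseteq k$, which is cyclic of degree $\ell$ over $E_0$ for every prime $\ell$. In characteristic $0$, take an odd prime $\ell$ and let $L_\ell\subseteq\mathbb Q(\zeta_{\ell^2})$ be the cyclic degree-$\ell$ subextension over $\mathbb Q$ from \cref{Prime-degree cyclic extensions}. Since $[E_0:\mathbb Q]$ is finite and $\ell$ is prime, for $\ell>[E_0:\mathbb Q]$ we have $L_\ell\cap E_0=\mathbb Q$. Because $L_\ell/\mathbb Q$ is Galois, $L=E_0L_\ell$ is then cyclic of degree $\ell$ over $E_0$, with $\Gal(L/E_0)\simeq\Gal(L_\ell/\mathbb Q)$.

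\emph{Step 2: extending the automorphism over $F$.} Since $F/E_0$ is regular, $F$ and $L$ are linearly disjoint over $E_0$. By \eqref{eq:galois}, $FL/F$ is Galois with $\Gal(FL/F)\simeq\Gal(L/E_0)$, cyclic of order $\ell$. Let $\rho$ be the generator restricting to a chosen generator of $\Gal(L/E_0)$. The extension $FL/F$ is a finite Galois subextension of the algebraic closure of $F$ in $k$. The construction of \cref{Extension of automorphisms}, applied with $F$ as the base field, therefore extends $\rho$ to some $\sigma\in\Aut_F(k)$. This $\sigma$ agrees with $\rho$ on $L$, so every $\lambda\in L\setminus E_0$ has a $\sigma$-orbit of exactly $\ell$ elements, by the remark on prime-degree cyclic extensions.

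\emph{Step 3: avoiding roots of $f$.} The orbit of $\lambda$ lies in $L$, so it suffices to choose $\lambda\in L\setminus E_0$ whose conjugates avoid the finite set $Z=\{z\in k: f(z)=0\}$. This holds as soon as $\lambda\notin\bigcup_r\sigma^{-r}(Z)$, a finite set with at most $\ell\deg f$ elements. We have $|L\setminus E_0|=q^\ell-q$ in characteristic $p$, and $L\setminus E_0$ is infinite in characteristic $0$. Hence for $\ell$ large the set $L\setminus E_0$ is not contained in that bad set, and a suitable $\lambda$ exists. In characteristic $p$ this requires $q^\ell-q>\ell\deg f$, which holds for large $\ell$. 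Alternatively, one can exclude $\lambda$ with $f(\lambda)=0$ together with its conjugates: since $f$ has coefficients in $F$ and $\sigma$ fixes $F$, $f(\sigma^r\lambda)=\sigma^r(f(\lambda))$. It therefore suffices to require $f(\lambda)\neq0$, which excludes at most $\deg f$ values.

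The main obstacle is Step 2: checking carefully that linear disjointness gives the restriction isomorphism \eqref{eq:galois} in the needed direction, and that the extension procedure of \cref{Extension of automorphisms} applies with base $F$ rather than the prime field. In characteristic zero we also need to confirm that $L_\ell\cap E_0=\mathbb Q$ for large $\ell$.
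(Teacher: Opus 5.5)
Your proposal is correct and follows essentially the paper's argument: you pass to the relative algebraic closure $E_0$ of the prime field in $F$, and build a cyclic degree-$\ell$ extension ($\mathbb F_{q^\ell}$, resp.\ $E_0$ composed with the degree-$\ell$ subfield of $\mathbb Q(\zeta_{\ell^2})$ for $\ell>[E_0:\mathbb Q]$); then you use linear disjointness from regularity, extend the generator to $\sigma\in\Aut_F(k)$, and apply $f(\sigma^r\lambda)=\sigma^r(f(\lambda))$. The only differences are cosmetic: you treat both characteristics uniformly and choose $\lambda\in L\setminus E_0$ rather than $\lambda\in FL\setminus F$, which works equally well because $\sigma|_L$ generates $\Gal(L/E_0)$.
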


\begin{proof}

Assume first that $\operatorname{char}k=p>0$, so that the prime field of $k$ is $\mathbb F_p$. Since $F$ is finitely generated over $\mathbb F_p$, the relative algebraic
closure of $\mathbb F_p$ in $F$ is a finite field, say
$\mathbb F_{q}$, and $F/\mathbb F_{q}$ is regular.  Hence $F$ and the algebraic extension
$\mathbb F_{q^\ell}$ are linearly disjoint over $\mathbb F_{q}$, see \cref{Linear disjointness and regular extensions}.
Their compositum
\[
E=F\mathbb F_{q^\ell}\subseteq k
\]
is therefore cyclic Galois of degree $\ell$ over $F$.  Let $\rho$ be a
generator of $\Gal(E/F)$.

If $F$ is infinite, then $E\setminus F$ is infinite: for a fixed
$u\in E\setminus F$, the elements $u+a$, $a\in F$, are pairwise distinct
and remain outside $F$.  If $F$ is finite, then $F=\mathbb F_{q}$ and $|E\setminus F|=q^\ell-q$. Since $f$ has only finitely many roots, for every sufficiently large
$\ell$ we may choose $\lambda\in E\setminus F$ with $f(\lambda)\neq0$.
The extension $E/F$ has prime degree, so the $\rho$-orbit of $\lambda$ has
length $\ell$.

Now suppose that $\operatorname{char}k=0$, so that the prime field is $\mathbb Q$.
Denote by $F_0$ the relative
algebraic closure of $\mathbb Q$ in $F$.  Then $F_0$ is a number field and
$F/F_0$ is regular.  Choose an odd prime $\ell>[F_0:\mathbb Q]$. The cyclotomic field $\mathbb Q(\zeta_{\ell^2})$ contains a cyclic
subextension $L/\mathbb Q$ of degree $\ell$, see \cref{Prime-degree cyclic extensions}.  
Put $I=L\cap F_0$.  Since
\[
\mathbb Q\subseteq I\subseteq L
\quad\text{and}\quad
[L:\mathbb Q]=\ell,
\]
the degree $[I:\mathbb Q]$ divides $\ell$ and is therefore either $1$ or
$\ell$.  On the other hand, $I\subseteq F_0$, so
\[
[I:\mathbb Q]\leq[F_0:\mathbb Q]<\ell.
\]
Consequently, $[I:\mathbb Q]=1$, and hence $L\cap F_0=\mathbb Q$.
Thus $F_0L/F_0$ is cyclic of degree $\ell$.

Since $F/F_0$ is regular,
$F$ and $F_0L$ are linearly disjoint over $F_0$, see \cref{Linear disjointness and regular extensions}.  It follows that
$E=FL\subseteq k$
is cyclic Galois of degree $\ell$ over $F$, cf. \eqref{eq:galois}.  Let $\rho$ be a generator of
$\Gal(E/F)$.  The set $E\setminus F$ is infinite, so we can choose
$\lambda\in E\setminus F$ with $f(\lambda)\neq0$.  Again the $\rho$-orbit
of $\lambda$ has length $\ell$.

In either characteristic, extend $\rho$ to an automorphism
$\sigma\in\Aut_F(k)$ by the procedure recalled in
\cref{Extension of automorphisms}.  Since $\sigma|_E=\rho$, the
$\sigma$-orbit of $\lambda$ still has length $\ell$.  Finally, the
coefficients of $f$ lie in $F$, and therefore
\[
f\bigl(\sigma^r(\lambda)\bigr)
 =\sigma^r\bigl(f(\lambda)\bigr)\neq0
\qquad(r\in\mathbb Z).
\qedhere\]
\end{proof}

\begin{figure}[ht]
\centering

\begin{minipage}[t]{0.48\textwidth}
\centering
\begin{tikzpicture}[
  >=Latex,
  line/.style={thick},
  dashedline/.style={thick,dashed},
  every node/.style={font=\small}
]
  \node (k) at (0,4.4) {$k$};
  \node (E) at (0,3.1) {$E=F\,\mathbb{F}_{q^\ell}$};
  \node (F) at (-2.0,1.5) {$F$};
  \node (Fql) at (2.0,1.5) {$\mathbb{F}_{q^\ell}$};
  \node (Fq0) at (0,0.0) {$\mathbb{F}_{q}$};
  \node (Fp) at (0,-1.3) {$\mathbb{F}_{p}$};

  \draw[line] (k) -- (E);
  \draw[line] (E) -- node[right] {$\ell$} (F);
  \draw[line] (E) --node[right] {regular} (Fql);
  \draw[line] (F) -- node[left] {regular} (Fq0);
  \draw[line] (Fql) -- node[right] { $\ell$ } (Fq0);
  \draw[line] (Fq0) -- node[right] {finite} (Fp);


  \node[font=\normalsize] at (0,-2.0) {(a) Positive characteristic};
\end{tikzpicture}
\end{minipage}
\hfill
\begin{minipage}[t]{0.48\textwidth}
\centering
\begin{tikzpicture}[
  >=Latex,
  line/.style={thick},
  dashedline/.style={thick,dashed},
  every node/.style={font=\small}
]
  \node (k) at (0,4.4) {$k$};
  \node (E) at (0,3.1) {$E=FL$};
  \node (F) at (-2.0,1.5) {$F$};
  \node (F0L) at (2.0,1.5) {$F_0L$};
  \node (F0) at (0,0.0) {$F_0$};
  \node (L) at (3.7,0.0) {$L$};
  \node (Q) at (1.85,-1.3) {$\mathbb{Q}$};

  \draw[line] (k) -- (E);
  \draw[line] (E) -- node[right,pos=.55] {$\ell$} (F);
  \draw[line] (E) --node[right] {regular} (F0L);
  \draw[line] (F) -- node[left] {regular} (F0);
  \draw[line] (F0L) -- node[right,pos=.55] {$\ell$} (F0);
  \draw[line] (F0L) --node[right] {finite} (L);
  \draw[line] (F0) -- node[left] {finite} (Q);
  \draw[line] (L) -- node[right] {$\ell$} (Q);



  \node[font=\normalsize] at (0,-2.0) {(b) Characteristic zero};
\end{tikzpicture}
\end{minipage}

\caption{Field configurations used to construct prime-order automorphism orbits. In positive characteristic, the regularity of $F/\mathbb{F}_{q}$ implies that $F$ and $\mathbb{F}_{q^\ell}$ are linearly disjoint over $\mathbb{F}_{q}$, so $E=F\mathbb{F}_{q^\ell}$ is cyclic Galois of degree $\ell$ over $F$. 
In characteristic zero, one first chooses a cyclic extension $L/\mathbb{Q}$ of degree $\ell$ inside a cyclotomic field, observes that $L\cap F_0=\mathbb{Q}$, and then uses the regularity of $F/F_0$ to deduce that $F$ and $F_0L$ are linearly disjoint over $F_0$.}
\label{fig:field-configurations}
\end{figure}

\subsection{A field of definition and scalar twists}
\label{subsec:scalar-twists}

Fix the data provided by \cref{lem:polynomial-embedding}.  Thus, $R=k[T,h(T)^{-1}]$
for some nonzero polynomial $h(T)\in k[T]$, and ${}_AM_R$ is an
$A$--$R$-bimodule which is free of finite rank as a right $R$-module.
Moreover, the tensor functor
\[
H=M\otimes_R-:\modcat R\longrightarrow\modcat A
\]
is a representation embedding.  For each $\lambda\in k$ with
$h(\lambda)\neq0$, recall that
\[
S_\lambda=R/(T-\lambda)R
\]
is the one-dimensional $R$-module on which $T$ acts as multiplication by
$\lambda$.  The modules $S_\lambda$ are pairwise nonisomorphic, and their
images $H(S_\lambda)$ form a family of pairwise nonisomorphic
indecomposable $A$-modules of the same dimension.

\begin{lemma}\label{lem:scalar-twist}
There is a subfield $F\subseteq k$, finitely generated over the prime
field and containing the coefficients of $h$, and, for every
$\sigma\in\Aut_{F}(k)$, a dimension-preserving exact additive
autoequivalence $\Phi_\sigma:\modcat A\longrightarrow\modcat A$. These autoequivalences satisfy $\Phi_1=\id_{\modcat A}$, $\Phi_\sigma\Phi_\tau=\Phi_{\sigma\tau}$, and
\begin{equation}\label{eq:scalar-equivariance}
\Phi_\sigma\bigl(H(S_\lambda)\bigr)
 \simeq H(S_{\sigma(\lambda)})
\end{equation}
whenever $h(\lambda)\neq0$.
\end{lemma}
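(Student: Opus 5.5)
We will descend all the data to a finitely generated subfield and then twist scalars. Fix a $k$-basis $a_1,\dots,a_d$ of $A$ with structure constants $a_ia_j=\sum_s c_{ij}^s a_s$. The bimodule $M$ is free of rank $t$ as a right $R$-module. Choose an $R$-basis $m_1,\dots,m_t$ of $M_R$. Then the left action of each $a_i$ is an $R$-linear endomorphism of $M_R\simeq R^t$, given by a matrix $N_i\in M_t(R)$. Finitely many elements of $R=k[T,h^{-1}]$ occur as entries, each of the form $g(T)/h(T)^e$.

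Let $F$ be the subfield generated over the prime field by the following elements: the coefficients of $h$, all structure constants $c_{ij}^s$, and the coefficients of the numerators $g$ appearing in the entries of the $N_i$. Then $F$ is finitely generated. Put $A_F=\bigoplus_i Fa_i$, $R_F=F[T,h^{-1}]$ and $M_F=\bigoplus_j m_jR_F$. Then $A_F$ is an $F$-algebra with $A=k\otimes_FA_F$, and $M_F$ is an $A_F$--$R_F$-bimodule with $M\simeq k\otimes_F M_F$ compatibly with both actions.

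For $\sigma\in\Aut_F(k)$, define $\Phi_\sigma$ on a module $X\in\modcat A$ by setting $\Phi_\sigma(X)={}^{\sigma}X$. Its underlying abelian group is $X$, and the new action is
\[
a\ast x=(\sigma^{-1}\otimes 1)(a)\,x ,
\]
where $\sigma\otimes1$ denotes the ring automorphism of $A=k\otimes_FA_F$ given by $\lambda a_i\mapsto\sigma(\lambda)a_i$. On morphisms, $\Phi_\sigma$ is the identity on underlying maps. Since $\sigma\otimes1$ is a ring automorphism (the $c_{ij}^s$ lie in $F$), ${}^\sigma X$ is an $A$-module. Morphisms are preserved, because an $A$-linear map remains linear for the twisted action. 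The functor is additive and exact, and $\Phi_{\sigma^{-1}}$ is a strict inverse. Strictly, $\Phi_1=\id$ and $\Phi_\sigma\Phi_\tau=\Phi_{\sigma\tau}$; the direction of $\sigma^{\pm1}$ in the definition is chosen to get this order of composition. Scalars now act via $\sigma^{-1}$, so a $k$-basis stays a $k$-basis, and dimension is preserved. This step is routine bookkeeping.

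The main point is \eqref{eq:scalar-equivariance}. We have $H(S_\lambda)=M\otimes_RR/(T-\lambda)\simeq M/M(T-\lambda)$. As a $k$-vector space this is $k^t$ with basis $\bar m_j$, and $a_i$ acts by the scalar matrix $N_i(\lambda)$, which is defined since $h(\lambda)\neq0$. Twisting by $\sigma$, the $k$-structure is transported by $\sigma$. Concretely, the $\sigma$-semilinear bijection $k^t\to k^t$, $v\mapsto\sigma(v)$ coordinatewise, intertwines the $a_i$-action on ${}^\sigma H(S_\lambda)$ with the action by the matrix $\sigma(N_i(\lambda))$. Since all coefficients defining $N_i$ lie in $F$ and are fixed by $\sigma$, we get $\sigma(N_i(\lambda))=N_i(\sigma(\lambda))$. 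The same bijection identifies scalar multiplication appropriately, so we obtain a $k$-linear $A$-isomorphism
\[
\Phi_\sigma(H(S_\lambda))\simeq H(S_{\sigma(\lambda)}).
\]
Also $h(\sigma(\lambda))=\sigma(h(\lambda))\neq0$, so the right-hand side is defined.

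The obstacle is only the consistency of conventions: $\sigma$ versus $\sigma^{-1}$ in the twist, and checking that the semilinear identification is $A$-linear with respect to the correct scalar structure. I would verify it on generators, namely scalars $\mu\in k$ and the basis elements $a_i$, and fix the convention so that both the composition law and \eqref{eq:scalar-equivariance} hold simultaneously.
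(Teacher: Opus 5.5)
Your proposal is correct and is essentially the paper's argument: it uses the same finitely generated field $F$ (coefficients of $h$, the structure constants, and the numerators of the matrix entries of the $a_i$ on $M_R$) and the same twist $\Phi_\sigma$ by $\sigma^{-1}$ on coefficients. The only difference is in how the equivariance is presented. The paper packages it as a natural isomorphism $\Phi_\sigma(M\otimes_R N)\simeq M\otimes_R\Psi_\sigma N$, built from a semilinear automorphism $\theta_\sigma$ of $M$, followed by $\Psi_\sigma(S_\lambda)\simeq S_{\sigma(\lambda)}$; you instead check it directly in coordinates on $H(S_\lambda)\simeq k^t$, and your coordinatewise map $v\mapsto\sigma(v)$ is exactly the composite of those two isomorphisms.
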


\begin{proof}
Choose a $k$-basis $a_1=1,a_2,\ldots,a_d$ of $A$ and a right $R$-basis
$m_1,\ldots,m_t$ of $M$.  Write
\[
a_ia_j=\sum_{u=1}^d c_{ij}^u a_u,
\qquad
a_im_s=\sum_{v=1}^t m_v f_{is}^v,
\]
where $c_{ij}^u\in k$ and $f_{is}^v\in R=k[T,h(T)^{-1}]$.  For each $f_{is}^v$ choose
$g_{is}^v(T)\in k[T]$ and $e_{is}^v\geq0$ such that
\[
f_{is}^v=\frac{g_{is}^v(T)}{h(T)^{e_{is}^v}}.
\]
Let $F$ be the subfield generated over the prime field by the
coefficients of $h$, all $c_{ij}^u$, and all coefficients of the
polynomials $g_{is}^v$.  This is a finitely generated field.

Fix $\sigma\in\Aut_{F}(k)$.  Applying $\sigma$ to coefficients and fixing
$T$ defines an automorphism $\widetilde\sigma$ of $R$.  Define
\[
\alpha_\sigma\left(\sum_{i=1}^d c_i a_i\right)
 =\sum_{i=1}^d\sigma(c_i)a_i
\qquad
\text{and}
\qquad
\theta_\sigma\left(\sum_{s=1}^t m_s f_s\right)
 =\sum_{s=1}^t m_s\widetilde\sigma(f_s).
\]
Because $\sigma$ fixes the chosen structure constants,
$\alpha_\sigma$ is a $\sigma$-semilinear ring automorphism of $A$, and
\begin{equation}\label{eq:theta-compatibility}
\theta_\sigma(amf)
 =\alpha_\sigma(a)\theta_\sigma(m)\widetilde\sigma(f)
\end{equation}
for $a\in A$, $m\in M$, and $f\in R$.

For an $A$-module $X$, let $\Phi_\sigma X$ have the same underlying
additive group as $X$, with action $a\mathbin{\cdot_\sigma}x=\alpha_{\sigma^{-1}}(a)x$. On morphisms, $\Phi_\sigma$ is the same underlying additive map.  This is an
exact additive autoequivalence with inverse $\Phi_{\sigma^{-1}}$.  Its
scalar multiplication is $c\mathbin{\cdot_\sigma}x=\sigma^{-1}(c)x$, and a $k$-basis of $X$ remains a $k$-basis after the twist, since $\sigma^{-1}$ is a field automorphism. 
So it preserves $k$-dimension.  The definitions also give the asserted
group law.

Now we prove the isomorphism \eqref{eq:scalar-equivariance}.
Define similarly an autoequivalence $\Psi_\sigma$ of $\modcat R$ by
retaining the underlying additive group and setting $f\mathbin{\cdot_\sigma}n=\widetilde{\sigma^{-1}}(f)n$. If $h(\lambda)\neq0$, then
$h(\sigma(\lambda))=\sigma(h(\lambda))\neq0$.
Under the standard identifications
\[
S_\mu=R/(T-\mu)R\simeq k
\]
induced by evaluation at $T=\mu$, the map
\[
S_{\sigma(\lambda)}\longrightarrow\Psi_\sigma(S_\lambda),
\qquad c\longmapsto\sigma^{-1}(c),
\]
is an $R$-module isomorphism.  Indeed, for $f\in R$,
\[
\sigma^{-1}\bigl(f(\sigma(\lambda))c\bigr)
 =\widetilde{\sigma^{-1}}(f)(\lambda)\sigma^{-1}(c).
\]

For $N\in\modcat R$, define
\[
\eta_N:\Phi_\sigma(M\otimes_RN)
 \longrightarrow M\otimes_R\Psi_\sigma N,
\qquad m\otimes n\longmapsto\theta_\sigma(m)\otimes n.
\]
The map is balanced: for $f\in R$,
\[
\begin{aligned}
\eta_N(mf\otimes n)
 &=\theta_\sigma(m)\widetilde\sigma(f)\otimes n\\
 &=\theta_\sigma(m)\otimes
   \bigl(\widetilde\sigma(f)\mathbin{\cdot_\sigma}n\bigr)\\
 &=\theta_\sigma(m)\otimes fn
 =\eta_N(m\otimes fn).
\end{aligned}
\]
It is $A$-linear by \eqref{eq:theta-compatibility}:
\[
\eta_N\bigl(a\mathbin{\cdot_\sigma}(m\otimes n)\bigr)
 =\theta_\sigma\bigl(\alpha_{\sigma^{-1}}(a)m\bigr)\otimes n
 =a\theta_\sigma(m)\otimes n.
\]
The same formula with $\sigma^{-1}$ gives an inverse.  The maps $\eta_N$
are natural in $N$, and hence
\[
\Phi_\sigma(M\otimes_RN)
 \simeq M\otimes_R\Psi_\sigma N.
\]
Taking $N=S_\lambda$ proves \eqref{eq:scalar-equivariance}.
\end{proof}

\subsection{Proof over an algebraically closed field}

We are now ready to prove the main theorem over an algebraically closed
field.  The proof combines the one-parameter family supplied by the
representation embedding with the semilinear twists constructed above.
By choosing parameters with successively prescribed prime orbit lengths,
\cref{prop:component-valuation} forces the resulting indecomposable modules
to lie in pairwise distinct connected components.

\begin{theorem}\label{thm:algebraically-closed}
Let $k$ be algebraically closed, and let $A$ be a representation-infinite
finite-dimensional $k$-algebra.  Then $\Gamma_A$ has infinitely many
connected components.
\end{theorem}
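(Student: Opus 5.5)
The proof will assemble the results already established. First fix the data of \cref{lem:polynomial-embedding}: a nonzero $h(T)$, $R=k[T,h(T)^{-1}]$, and the representation embedding $H=M\otimes_R-$. Then take the finitely generated subfield $F\subseteq k$ and the autoequivalences $\Phi_\sigma$ for $\sigma\in\Aut_F(k)$ supplied by \cref{lem:scalar-twist}. Each $\Phi_\sigma$ is additive, dimension-preserving and invertible. It therefore satisfies the hypotheses of \cref{subsec:orbit-lengths}, so \cref{prop:component-valuation} applies to it.

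Apply \cref{lem:prime-orbits} with this $F$ and $f=h\in F[T]$. For every sufficiently large prime $\ell$, and in particular every $\ell>C_A^2$ beyond some bound, it gives $\lambda_\ell\in k$ and $\sigma_\ell\in\Aut_F(k)$ with the following properties: the $\sigma_\ell$-orbit of $\lambda_\ell$ has exactly $\ell$ elements, and $h$ vanishes nowhere on it. Put $X_\ell=H(S_{\lambda_\ell})$. By \eqref{eq:scalar-equivariance} and the group law, $\Phi_{\sigma_\ell}^r X_\ell\simeq H(S_{\sigma_\ell^r(\lambda_\ell)})$ for all $r\in\mathbb Z$. Since $H$ reflects isomorphism classes and $S_\mu\simeq S_\nu$ forces $\mu=\nu$, we have $\Phi_{\sigma_\ell}^rX_\ell\simeq X_\ell$ if and only if $\sigma_\ell^r(\lambda_\ell)=\lambda_\ell$. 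Hence $o_{\Phi_{\sigma_\ell}}(X_\ell)=\ell$.

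Now take two such primes $\ell\neq\ell'$, both exceeding $C_A^2$, and suppose $X_\ell$ and $X_{\ell'}$ lie in one component $\mathcal C$. The difficulty is that the two modules come with different automorphisms. So compare the two modules under the single autoequivalence $\Phi=\Phi_{\sigma_\ell}$. The $\Phi$-orbit of $X_\ell$ is finite of length $\ell$, so by \cref{prop:component-valuation} the valuation $v_\ell(o_\Phi(\cdot))$ is defined on all of $\mathcal C$ and constant there, equal to $1$. We then need $v_\ell\bigl(o_\Phi(X_{\ell'})\bigr)=0$, which is not automatic since $\sigma_\ell$ may move $\lambda_{\ell'}$ arbitrarily.

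To handle this I would choose the parameters inductively, enlarging $F$ at each stage. Having chosen $\lambda_{\ell_1},\dots,\lambda_{\ell_{j-1}}$, replace $F$ by $F_j=F(\lambda_{\ell_1},\dots,\lambda_{\ell_{j-1}})$. This field is still finitely generated and still contains the coefficients of $h$. Apply \cref{lem:prime-orbits} over $F_j$ to get $\ell_j>\ell_{j-1}$, $\lambda_{\ell_j}$ and $\sigma_j\in\Aut_{F_j}(k)\subseteq\Aut_F(k)$. Then $\sigma_j$ fixes every earlier $\lambda_{\ell_i}$, so $o_{\Phi_{\sigma_j}}(X_{\ell_i})=1$ for $i<j$, while $o_{\Phi_{\sigma_j}}(X_{\ell_j})=\ell_j$. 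If $X_{\ell_i}$ and $X_{\ell_j}$ with $i<j$ lay in one component, \cref{prop:component-valuation} applied to $\Phi_{\sigma_j}$ and the prime $\ell_j>C_A^2$ would give $v_{\ell_j}(1)=v_{\ell_j}(\ell_j)$, which is a contradiction. Therefore the modules $X_{\ell_1},X_{\ell_2},\dots$ lie in pairwise distinct components, and $\Gamma_A$ has infinitely many components.

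The main obstacle is exactly this need for one automorphism that acts nontrivially on one module and trivially on the other. It is resolved by enlarging the field of definition at each step, which \cref{lem:prime-orbits} allows because it applies to any finitely generated $F$.
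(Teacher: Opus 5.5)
Your proposal is correct and is essentially the paper's own proof. The key step matches exactly: you recursively enlarge the field to $F_j=F(\lambda_{\ell_1},\dots,\lambda_{\ell_{j-1}})$, so that the single twist $\Phi_{\sigma_j}$ has orbit length $\ell_j$ on $X_{\ell_j}$ but fixes every earlier $X_{\ell_i}$, and \cref{prop:component-valuation} applied to $\Phi_{\sigma_j}$ with the prime $\ell_j>C_A^2$ then gives the contradiction.
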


\begin{proof}
Let $h$, $R$, and $H$ be as in
\cref{lem:polynomial-embedding}, and let $F$ and the action
$\sigma\mapsto\Phi_\sigma$ be as in \cref{lem:scalar-twist}.
We recursively construct
parameters $\lambda_i\in k$, pairwise distinct primes $\ell_i$,
automorphisms $\sigma_i$, and indecomposable modules $X_i$.

Suppose that $\lambda_1,\ldots,\lambda_{i-1}$ have been chosen, and put $F_i=F(\lambda_1,\ldots,\lambda_{i-1})$, with $F_1=F$.  This field is finitely generated over the prime field.
Choose a prime $\ell_i$, distinct from
$\ell_1,\ldots,\ell_{i-1}$, such that $\ell_i>C_A^2$ and such that
\cref{lem:prime-orbits} applies to $F_i$, $h$, and $\ell_i$.  This is
possible because only finitely many primes have been used and the lemma
applies to every sufficiently large prime.  We obtain
\[
\lambda_i\in k,
\qquad
\sigma_i\in\Aut_{F_i}(k),
\]
such that the $\sigma_i$-orbit of $\lambda_i$ has length $\ell_i$ and $h\bigl(\sigma_i^r(\lambda_i)\bigr)\neq0$ for every $r\in\mathbb Z$. In particular, $\lambda_i\notin F_i$, so the parameters chosen at different
stages are distinct.  Set $X_i=H(S_{\lambda_i})$. The module $X_i$ is indecomposable because $H$ is a representation
embedding.

Let $\Phi_i=\Phi_{\sigma_i}$.  The group law in \cref{lem:scalar-twist} and
\eqref{eq:scalar-equivariance} give $\Phi_i^r(X_i) \simeq H(S_{\sigma_i^r(\lambda_i)})$ for every $r\in\mathbb Z$. Since $H$ reflects isomorphism classes and
$S_\mu\simeq S_\nu$ precisely when $\mu=\nu$, it follows that $o_{\Phi_i}(X_i)=\ell_i$. For $j<i$, the automorphism $\sigma_i$ fixes $\lambda_j$, since it fixes
$F_i$ pointwise.  Hence $o_{\Phi_i}(X_j)=1$.

If $X_i$ and $X_j$, with $j<i$, belonged to the same connected component,
\cref{prop:component-valuation}, applied to $\Phi_i$ and $\ell_i$, would
give $v_{\ell_i}\bigl(o_{\Phi_i}(X_i)\bigr)=v_{\ell_i}\bigl(o_{\Phi_i}(X_j)\bigr)$. The two sides are $1$ and $0$, respectively.  This contradiction shows that
$X_1,X_2,\ldots$ lie in pairwise distinct connected components of
$\Gamma_A$.
\end{proof}

\section{Separable base change}\label{sec:base-change}

Throughout this section, $k$ is perfect, $A$ is a finite-dimensional
$k$-algebra, and $K=\kkbar$.  We write
\[
A_K=K\otimes_kA,
\qquad
X_K=K\otimes_kX,
\qquad
G=\Gal(K/k),
\]
where $X\in\modcat A$.
Since $k$ is perfect, $K/k$ is a Galois extension and $K^G=k$.

A finite-dimensional semisimple algebra over a perfect field is a separable
algebra, where we refer
to \cite{DeMeyerIngraham,Pierce} for separable algebras and their behavior
under scalar extension. 
It follows that, for every finite-dimensional $k$-algebra $A$,
\begin{equation}\label{eq:jacobson-base-change-prelim}
J(K\otimes_kA)=K\otimes_kJ(A).
\end{equation}
Indeed, $K\otimes_kJ(A)$ is a nilpotent ideal of $K\otimes_kA$, and hence
$K\otimes_kJ(A)\subseteq J(K\otimes_kA)$. On the other hand,
$(K\otimes_kA)/(K\otimes_kJ(A))
 \simeq K\otimes_k(A/J(A))$
is semisimple, which gives the reverse inclusion.

Finally, if $V$ is a finite-dimensional $k$-vector space, then
\begin{equation}\label{eq:galois-invariants-tensor}
(K\otimes_kV)^G=1\otimes V,
\end{equation}
where $G$ acts on the first tensor factor.  Indeed, after expanding an
invariant tensor in a $k$-basis of $V$, each of its coefficients belongs
to $K^G=k$.

\subsection{Radicals and irreducible morphisms}
\label{subsec:radical-base-change}

We begin with a descent statement for finite-dimensional modules.
Since $K/k$ is algebraic, every finite-dimensional $A_K$-module is defined
over a finite intermediate extension.  Since $k$ is perfect, such an
extension is separable, which yields the direct-summand statement below. The lemma is proved in
\cite[Proposition~4.13]{Kasjan}; compare also
\cite[Theorem~3.3]{JensenLenzing} in the representation-finite setting. We include a short proof for the convenience of the readers.

\begin{lemma}\label{lem:descent-summand}
Every indecomposable $A_K$-module is isomorphic to a direct summand of
$X_K$ for some indecomposable $A$-module $X$.
\end{lemma}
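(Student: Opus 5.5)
Let $N$ be an indecomposable $A_K$-module. The plan is to descend $N$ to a finite separable extension, restrict scalars down to $k$, and then use a trace (averaging) argument to show that $N$ is a summand of the scalar extension of the resulting $A$-module.

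\textbf{Step 1: descent to a finite extension.} Choose a $K$-basis of $N$ and write the action of a $k$-basis $a_1,\dots,a_d$ of $A$ as matrices over $K$. Only finitely many entries occur, so they lie in a finite subextension $L/k$ of $K/k$. Let $N_L$ be the $L$-span of the basis; it is an $A_L$-module with $K\otimes_L N_L\simeq N$. Since $k$ is perfect, $L/k$ is separable.

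\textbf{Step 2: restriction to $A$.} Let $Y$ denote $N_L$ regarded as an $A$-module by restriction along $A\to A_L$. Then $\dim_kY=[L:k]\dim_LN_L<\infty$.

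\textbf{Step 3: $N_L$ is a summand of $L\otimes_kY$.} There is a surjective $A_L$-homomorphism
\[
\mu: L\otimes_kY\to N_L,\qquad c\otimes y\mapsto cy .
\]
To split it, use separability of $L/k$: the algebra $L\otimes_kL$ is separable, so it has a separability idempotent $e=\sum_i x_i\otimes y_i$ with $\sum_i x_iy_i=1$ and $(c\otimes1)e=(1\otimes c)e$ for all $c\in L$. Define
\[
s(n)=\sum_i x_i\otimes y_in .
\]
This map is $A$-linear, because $A$ commutes with $L$. It is also $L$-linear, by the defining property of $e$. Finally $\mu s(n)=\sum_i x_iy_in=n$, so $s$ splits $\mu$ over $A_L$. (Alternatively, one can use the trace form $\Tr_{L/k}$ and a dual basis.)

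\textbf{Step 4: pass to $K$ and to an indecomposable summand.} Tensoring Step 3 with $K$ over $L$ shows that $N$ is a direct summand of
\[
K\otimes_L(L\otimes_kY)\simeq Y_K .
\]
Decompose $Y=\bigoplus_j X_j$ into indecomposable $A$-modules, so $Y_K\simeq\bigoplus_j (X_j)_K$. Since $N$ is indecomposable and $\modcat A_K$ is Krull--Schmidt, $N$ is isomorphic to a direct summand of some $(X_j)_K$. Each $(X_j)_K$ is finite-dimensional, so it decomposes into finitely many indecomposables, and Krull--Schmidt uniqueness applied to $Y_K$ gives the claim.

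The main obstacle is Step 3, the splitting. It is exactly where separability enters: for an inseparable extension, $\mu$ need not split. I would present it with the separability idempotent of $L\otimes_kL$, which exists because $L/k$ is finite separable.
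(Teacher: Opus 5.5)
Your proof is correct and follows essentially the same route as the paper. Both arguments descend $N$ to a finite separable subextension $L$, restrict scalars to $A$, use separability of $L/k$ to realize $N$ as a summand of the scalar extension of the restricted module, and finish with Krull--Schmidt. The difference is cosmetic: you split the multiplication map over $L$ using a separability idempotent of $L\otimes_kL$, while the paper decomposes $K\otimes_kL\simeq\prod_{\tau\in\Hom_k(L,K)}K$ and picks out the factor belonging to the inclusion, and your idempotent becomes exactly that factor's idempotent after base change to $K$.
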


\begin{proof}
Let $Y$ be an indecomposable $A_K$-module.  Choose a $k$-basis
$a_1,\ldots,a_d$ of $A$ and a $K$-basis of $Y$.  With respect to the
latter basis, the actions of $a_1,\ldots,a_d$ are represented by finitely
many matrices with entries in $K$.  Let $L$ be the subfield of $K$
generated over $k$ by all entries of these matrices.  The same matrices
define an $A_L$-module $Y_0$, where $A_L=L\otimes_kA$, and scalar extension
gives an isomorphism
\[
K\otimes_LY_0\simeq Y.
\]

Since $L$ is generated over $k$ by finitely many elements of the algebraic
extension $K/k$, the extension $L/k$ is finite.  Since $k$ is perfect,
$L/k$ is moreover separable.  Therefore
\[
K\otimes_kL
 \simeq
\prod_{\tau\in\Hom_k(L,K)}K.
\]
Regarding $Y_0$ as an $A$-module by restriction of scalars, we consequently
obtain
\[
K\otimes_kY_0
 \simeq
(K\otimes_kL)\otimes_LY_0
 \simeq
\bigoplus_{\tau\in\Hom_k(L,K)}
K\otimes_{L,\tau}Y_0.
\]
The summand corresponding to the inclusion
$\iota:L\hookrightarrow K$ is
$K\otimes_{L,\iota}Y_0\simeq Y$.
Thus $Y$ is a direct summand of $K\otimes_kY_0$.

Decompose the finite-dimensional $A$-module $Y_0$ into indecomposable
summands,
\[
Y_0\simeq X_1\oplus\cdots\oplus X_m.
\]
Then
\[
K\otimes_kY_0
 \simeq
(X_1)_K\oplus\cdots\oplus(X_m)_K.
\]
Since $Y$ is indecomposable, the Krull--Schmidt theorem implies that $Y$
is isomorphic to a direct summand of $(X_j)_K$ for some $j$.  This proves the assertion.
\end{proof}

\begin{lemma}\label{lem:radical-base-change}
For all $U,V\in\modcat A$, scalar extension induces an equality
\begin{equation}\label{eq:radical-base-change}
\rad_{A_K}(U_K,V_K)=K\otimes_k\rad_A(U,V).
\end{equation}
\end{lemma}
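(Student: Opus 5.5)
Via the block description \eqref{eq:radical-block}, I will reduce the statement to the base change of the Jacobson radical of an endomorphism algebra, which \eqref{eq:jacobson-base-change-prelim} already handles. Put $W=U\oplus V$ and $E=\End_A(W)$. Since $W$ is finite-dimensional and $K/k$ is flat, the canonical map
\[
K\otimes_k\Hom_A(U',V')\longrightarrow\Hom_{A_K}(U'_K,V'_K)
\]
is an isomorphism for all $U',V'\in\modcat A$. This is the standard argument: choose a finite presentation $A^p\to A^q\to U'\to0$, apply $\Hom_A(-,V')$ to get a left exact sequence, tensor with $K$, and compare with the corresponding sequence over $A_K$, using $\Hom_{A_K}(A_K,V'_K)=V'_K=K\otimes_kV'$. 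In particular $\End_{A_K}(W_K)\simeq K\otimes_kE$ as $K$-algebras, and the idempotents $e_U,e_V\in E$ go to $1\otimes e_U$ and $1\otimes e_V$, which are exactly the idempotents of the decomposition $W_K=U_K\oplus V_K$.

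Next, $E$ is a finite-dimensional $k$-algebra, so \eqref{eq:jacobson-base-change-prelim} applies to it, with $E$ in place of $A$. This gives $J(K\otimes_kE)=K\otimes_kJ(E)$, and this is the step that needs perfectness of $k$: $E/J(E)$ is semisimple, hence separable over $k$, so $K\otimes_k E/J(E)$ remains semisimple.

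Applying \eqref{eq:radical-block} over $A_K$ to the pair $U_K,V_K$ then yields
\[
\rad_{A_K}(U_K,V_K)=(1\otimes e_V)\,J(K\otimes_kE)\,(1\otimes e_U)=(1\otimes e_V)\bigl(K\otimes_kJ(E)\bigr)(1\otimes e_U)=K\otimes_k e_VJ(E)e_U .
\]
The last equality holds because multiplication by the idempotents acts only on the second tensor factor and commutes with $K\otimes_k-$. By \eqref{eq:radical-block} over $A$, the right-hand side is $K\otimes_k\rad_A(U,V)$, viewed inside $K\otimes_k\Hom_A(U,V)=\Hom_{A_K}(U_K,V_K)$.

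The only point needing care is that \eqref{eq:radical-block} holds for arbitrary, not necessarily indecomposable, modules in a Hom-finite Krull--Schmidt category. This is standard, since the radical is an ideal of $\modcat A$ that restricts to $J$ on endomorphism rings, and it applies equally over $A_K$. I expect no step to be a genuine obstacle. The one to check most carefully is that the Hom base-change isomorphism is compatible with composition, so that it really is an algebra isomorphism $\End_{A_K}(W_K)\simeq K\otimes_kE$ sending idempotents to idempotents. That compatibility is immediate from the formula $(\lambda\otimes f)(\mu\otimes g)=\lambda\mu\otimes fg$.
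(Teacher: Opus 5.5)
Your proposal is correct and follows the paper's proof: pass to $E=\End_A(U\oplus V)$, identify $K\otimes_kE$ with $\End_{A_K}(U_K\oplus V_K)$, apply $J(K\otimes_kE)=K\otimes_kJ(E)$, and take the $(V,U)$-block via \eqref{eq:radical-block}. The only difference is that you prove the Hom base-change isomorphism yourself from a finite presentation, whereas the paper cites \cite[Lemma 2.2]{Kasjan} for it.
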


\begin{proof}
Set $E=\End_A(U\oplus V)$.  By \cite[Lemma 2.2]{Kasjan}, there is a
natural isomorphism
\[
K\otimes_kE\simeq\End_{A_K}(U_K\oplus V_K).
\]
Since $k$ is perfect, \eqref{eq:jacobson-base-change-prelim} applied to $E$ gives
\[
J(K\otimes_kE)=K\otimes_kJ(E).
\]
Let $e_U,e_V\in E$ be the idempotents associated with the summands $U$ and
$V$.  Taking the $(V,U)$-block in the equality above and using
\eqref{eq:radical-block} gives \eqref{eq:radical-base-change}.
\end{proof}

\begin{lemma}\label{lem:radical-square-base-change}
For all $U,V\in\modcat A$, scalar extension induces an equality
\begin{equation}\label{eq:radical-square-base-change}
\rad_{A_K}^2(U_K,V_K)=K\otimes_k\rad_A^2(U,V).
\end{equation}
\end{lemma}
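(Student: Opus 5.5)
We prove the two inclusions separately, using \cref{lem:radical-base-change} throughout.

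\emph{The inclusion $\supseteq$.} Let $W\in\modcat A$, $f\in\rad_A(U,W)$ and $g\in\rad_A(W,V)$. By \cref{lem:radical-base-change}, $1\otimes f\in\rad_{A_K}(U_K,W_K)$ and $1\otimes g\in\rad_{A_K}(W_K,V_K)$. Hence their composite $1\otimes gf$ lies in $\rad^2_{A_K}(U_K,V_K)$. Taking $K$-spans, which is allowed because $\rad^2_{A_K}$ is a $K$-subspace, gives $K\otimes_k\rad^2_A(U,V)\subseteq\rad^2_{A_K}(U_K,V_K)$.

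\emph{The inclusion $\subseteq$: reduction to finitely many middle terms.} By definition, $\rad^2_{A_K}(U_K,V_K)$ is spanned by composites $\beta\alpha$ with $\alpha\in\rad_{A_K}(U_K,Y)$ and $\beta\in\rad_{A_K}(Y,V_K)$, where $Y$ ranges over $A_K$-modules. Since the radical is additive in $Y$, we may take $Y$ indecomposable. By \cref{lem:descent-summand}, $Y$ is then a direct summand of $W_K$ for some indecomposable $A$-module $W$. Let $\iota\colon Y\to W_K$ and $\pi\colon W_K\to Y$ be the split inclusion and projection, so that $\pi\iota=1_Y$. Then
\[
\beta\alpha=(\beta\pi)(\iota\alpha).
\]
Here $\iota\alpha\in\rad_{A_K}(U_K,W_K)$ and $\beta\pi\in\rad_{A_K}(W_K,V_K)$, because the radical is an ideal of the category. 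It therefore suffices to treat middle terms of the form $W_K$.

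\emph{The inclusion $\subseteq$: middle terms $W_K$.} By \cref{lem:radical-base-change}, $\rad_{A_K}(U_K,W_K)=K\otimes_k\rad_A(U,W)$, and likewise for $(W_K,V_K)$. Composition is $K$-bilinear and compatible with scalar extension, that is, $(c\otimes g)(c'\otimes f)=cc'\otimes gf$. Expanding both factors in $k$-bases of the two radical spaces shows that every such composite lies in $K\otimes_k\rad_A(W,V)\rad_A(U,W)$. This space is contained in $K\otimes_k\rad^2_A(U,V)$.

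The only delicate point is the descent step. Here one must ensure that restricting to middle terms of the form $W_K$ loses nothing, and this is exactly what \cref{lem:descent-summand}, together with the ideal property of the radical, provides. Everything else is a formal consequence of \cref{lem:radical-base-change} and the natural isomorphism $K\otimes_k\Hom_A(X,Z)\simeq\Hom_{A_K}(X_K,Z_K)$ of \cite[Lemma~2.2]{Kasjan}, which is compatible with composition.
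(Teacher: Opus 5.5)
Your proof is correct and follows essentially the same route as the paper: you extend factorizations for $\supseteq$, and for $\subseteq$ you use \cref{lem:descent-summand} to replace the middle term by a split summand of some $W_K$, then expand both radical factors via \cref{lem:radical-base-change}. The only cosmetic difference is that you first reduce to an indecomposable middle term, whereas the paper decomposes $N$ and takes the direct sum of the descended modules to obtain a single $W$ with $N\mid W_K$.
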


\begin{proof}
The inclusion from right to left follows by extending factorizations.  For
the reverse inclusion, linearity reduces the problem to a single
composition
\[
U_K\xrightarrow{\alpha}N\xrightarrow{\beta}V_K
\]
with $\alpha\in\rad_{A_K}(U_K,N)$ and $\beta\in\rad_{A_K}(N,V_K)$. Decompose $N$ into indecomposable summands and apply
\cref{lem:descent-summand} to each of them.  Taking the direct sum of the
resulting $A$-modules, we obtain $W\in\modcat A$ and split maps
\[
N\xrightarrow{i}W_K\xrightarrow{p}N,
\qquad pi=1_N.
\]
Since the radical is a two-sided ideal,
\[
i\alpha\in\rad_{A_K}(U_K,W_K),
\qquad
\beta p\in\rad_{A_K}(W_K,V_K).
\]
By \cref{lem:radical-base-change}, there are radical morphisms
$f_r:U\to W$ and $g_s:W\to V$, together with scalars $a_r,b_s\in K$, such
that
\[
i\alpha=\sum_r a_r(f_r)_K,
\qquad
\beta p=\sum_s b_s(g_s)_K.
\]
Consequently,
\[
\beta\alpha=(\beta p)(i\alpha)
 =\sum_{r,s}a_rb_s(g_sf_r)_K
 \in K\otimes_k\rad_A^2(U,V).
\]
This proves the reverse inclusion.
\end{proof}

\begin{corollary}\label{cor:irr-base-change}
For all $U,V\in\modcat A$, scalar extension induces a natural isomorphism
\begin{equation*}
K\otimes_k\Irr_A(U,V)
 \simeq\Irr_{A_K}(U_K,V_K).
\end{equation*}
\end{corollary}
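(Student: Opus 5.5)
The isomorphism comes from combining \cref{lem:radical-base-change} and \cref{lem:radical-square-base-change} with exactness of $K\otimes_k-$. Start from the short exact sequence of $k$-vector spaces
\[
0\longrightarrow\rad_A^2(U,V)\longrightarrow\rad_A(U,V)\longrightarrow\Irr_A(U,V)\longrightarrow0 .
\]
Since $K$ is flat over the field $k$, tensoring with $K$ gives an exact sequence
\[
0\longrightarrow K\otimes_k\rad_A^2(U,V)\longrightarrow K\otimes_k\rad_A(U,V)\longrightarrow K\otimes_k\Irr_A(U,V)\longrightarrow0 .
\]

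Next, identify the first two terms inside $\Hom_{A_K}(U_K,V_K)$. Use the natural isomorphism $K\otimes_k\Hom_A(U,V)\simeq\Hom_{A_K}(U_K,V_K)$, which is the $(V,U)$-block of \cite[Lemma 2.2]{Kasjan} as used in the proof of \cref{lem:radical-base-change}. By \cref{lem:radical-base-change}, the middle term maps isomorphically onto $\rad_{A_K}(U_K,V_K)$. By \cref{lem:radical-square-base-change}, the first term maps isomorphically onto $\rad^2_{A_K}(U_K,V_K)$.

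These identifications are compatible with the inclusions, because both are restrictions of the same map $c\otimes f\mapsto c\,f_K$. Passing to cokernels therefore gives an isomorphism
\[
K\otimes_k\Irr_A(U,V)\xrightarrow{\ \sim\ }\rad_{A_K}(U_K,V_K)/\rad^2_{A_K}(U_K,V_K)=\Irr_{A_K}(U_K,V_K),
\]
given by $c\otimes\bar f\mapsto c\,\overline{f_K}$.

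It remains to check naturality in $U$ and $V$. Precomposing or postcomposing with $g_K$, for $g$ a morphism of $A$-modules, commutes with $f\mapsto f_K$, because $(hg)_K=h_Kg_K$. Both radical ideals are preserved under such compositions, so the induced maps on quotients commute with the isomorphism.

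No step is a genuine obstacle. The only point needing care is that the identifications in \cref{lem:radical-base-change} and \cref{lem:radical-square-base-change} come from one and the same map $K\otimes_k\Hom_A(U,V)\to\Hom_{A_K}(U_K,V_K)$. This holds by construction in their proofs, so the induced map on quotients is well defined.
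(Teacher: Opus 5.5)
Your proposal is correct and is exactly the paper's argument. You tensor $0\to\rad_A^2(U,V)\to\rad_A(U,V)\to\Irr_A(U,V)\to0$ with $K$, which is exact over the field $k$, and then identify the first two terms through the two base-change equalities, both taken inside $\Hom_{A_K}(U_K,V_K)$ via the single map $c\otimes f\mapsto c f_K$. The only difference is that you spell out the compatibility and naturality checks that the paper leaves implicit.
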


\begin{proof}
Tensoring over the field $k$ is exact.  Take the quotient of
\eqref{eq:radical-base-change} by
\eqref{eq:radical-square-base-change}.
\end{proof}

\subsection{Galois orbits and path lifting}\label{subsec:path-lifting}
We now study how the connected components of the Auslander--Reiten quiver
behave under scalar extension from $k$ to $K$.  The main point is that,
although an indecomposable $A$-module may become decomposable over $K$,
the Galois group acts transitively on the isomorphism classes of its
indecomposable direct summands.  Combining this transitivity with the
base-change isomorphism for irreducible morphisms allows us to lift paths
in $\Gamma_A$ to paths in $\Gamma_{A_K}$.  We will use this to show that
finiteness of the number of connected components passes from
$\Gamma_A$ to $\Gamma_{A_K}$.

For $\sigma\in G$, let $\widehat{\sigma}$ be the automorphism of $A_K$
defined by
\[
\widehat{\sigma}(\lambda\otimes a)=\sigma(\lambda)\otimes a
\qquad
(\lambda\in K,\ a\in A).
\]
For an $A_K$-module $N$, let ${}^\sigma N$ denote the twist of $N$ by
$\widehat{\sigma}$, that is, the module with the same underlying additive
group as $N$ and with $A_K$-action
\[
a\mathbin{\cdot_\sigma}n=\widehat{\sigma}^{-1}(a)n
\qquad
(a\in A_K,\ n\in N).
\]
For an $A_K$-linear map $f:N\to N'$, let
${}^\sigma f:{}^\sigma N\longrightarrow{}^\sigma N'$
be the same underlying additive map.  This defines an exact additive
autoequivalence
\[
{}^\sigma(-):\modcat A_K\longrightarrow\modcat A_K
\]
with inverse ${}^{\sigma^{-1}}(-)$.  In particular, it preserves
indecomposable modules and irreducible
morphisms.  Moreover,
${}^\sigma({}^\tau N)={}^{\sigma\tau}N$,
so these autoequivalences define an action of $G$ on the isomorphism
classes of $A_K$-modules.

If $X\in\modcat A$, define
\begin{equation}\label{eq:extended-module-twist-isomorphism}
t_{\sigma,X}:X_K\longrightarrow{}^\sigma X_K,
\qquad
\lambda\otimes x\longmapsto\sigma^{-1}(\lambda)\otimes x.
\end{equation}
This map is an $A_K$-module isomorphism.  Indeed, for
$\lambda,\mu\in K$, $a\in A$, and $x\in X$, we have
\[
\begin{aligned}
t_{\sigma,X}\bigl((\mu\otimes a)(\lambda\otimes x)\bigr)
 &=\sigma^{-1}(\mu\lambda)\otimes ax,\\
(\mu\otimes a)\mathbin{\cdot_\sigma}
 t_{\sigma,X}(\lambda\otimes x)
 &=(\sigma^{-1}(\mu)\otimes a)
   (\sigma^{-1}(\lambda)\otimes x)\\
 &=\sigma^{-1}(\mu\lambda)\otimes ax.
\end{aligned}
\]
Its inverse is given by
\[
{}^\sigma X_K\longrightarrow X_K,
\qquad
\lambda\otimes x\longmapsto\sigma(\lambda)\otimes x.
\]
Consequently, ${}^\sigma X_K\simeq X_K$ for every $\sigma\in G$, and hence
$G$ permutes the isomorphism classes of the indecomposable direct summands
of $X_K$. 
The following lemma shows that the action is transitive.

\begin{lemma}\label{lem:galois-transitivity}
Let $X$ be an indecomposable $A$-module.  The group $G$ acts transitively on
the isomorphism classes of indecomposable direct summands of $X_K$.
\end{lemma}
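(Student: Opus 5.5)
Fix an indecomposable direct summand $Y$ of $X_K$, let $\mathcal O$ be the set of isomorphism classes ${}^\sigma Y$ for $\sigma\in G$, and put $Y'=\bigoplus_{[Z]\in\mathcal O}Z$. The set $\mathcal O$ is finite because every ${}^\sigma Y$ is a summand of ${}^\sigma X_K\simeq X_K$. The plan is to show that some $A_K$-direct summand of $X_K$ isomorphic to a multiple of $Y'$ descends to a nonzero $A$-direct summand of $X$. Indecomposability of $X$ then makes that summand all of $X$. Consequently $X_K$ is a sum of modules in $\mathcal O$, which is transitivity.

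\textbf{Step 1 (idempotent).} Work in the algebra $E_K=\End_{A_K}(X_K)\simeq K\otimes_k\End_A(X)$, using the isomorphism of \cite[Lemma~2.2]{Kasjan} already invoked in \cref{lem:radical-base-change}. The group $G$ acts semilinearly on $E_K$ through the coefficient factor, and its fixed points are $1\otimes\End_A(X)$ by \eqref{eq:galois-invariants-tensor}. In module terms, $\sigma$ sends $\varphi$ to $t_{\sigma,X}^{-1}\circ{}^\sigma\varphi\circ t_{\sigma,X}$.

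Let $I\subseteq E_K$ be the set of endomorphisms that factor through a finite direct sum of copies of modules in $\mathcal O$. This is a two-sided ideal, and since $\mathcal O$ is $G$-stable, $I$ is $G$-stable. Because $\End_{A_K}(X_K)$ is finite-dimensional over $K$, $I$ is a $K$-subspace defined over a finite Galois extension. The standard descent fact, that a $G$-stable $K$-subspace of $K\otimes_k V$ has the form $K\otimes_k I_0$ with $I_0=I^G$ (Speiser's lemma, or equivalently \eqref{eq:galois-invariants-tensor} applied after a finite Galois reduction), gives $I=K\otimes_k I_0$. Here $I_0$ is an ideal of $\End_A(X)$.

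\textbf{Step 2 (local ring).} $\End_A(X)$ is local. If $I_0\subseteq J(\End_A(X))$, then by \eqref{eq:jacobson-base-change-prelim} we get $I=K\otimes I_0\subseteq J(E_K)$. But $I$ contains the idempotent projection of $X_K$ onto its summand $Y$, which is nonzero and not nilpotent, so it cannot lie in the Jacobson radical. Hence $I_0$ contains a unit of $\End_A(X)$, so $I=E_K$ and in particular $1_{X_K}\in I$. Thus $X_K$ is a direct summand of a finite direct sum of modules in $\mathcal O$. By Krull--Schmidt, every indecomposable summand of $X_K$ is isomorphic to some ${}^\sigma Y$, which proves the lemma.

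\textbf{Main obstacle.} The hardest step is the descent $I=K\otimes_k I^G$ when $G$ is profinite and infinite. I would handle it by first descending $X$'s structure to a finite Galois subextension $L/k$ large enough that the summand $Y$ and the chosen idempotent are defined over $L$. This works because the idempotent lives in $K\otimes_k\End_A(X)$, a finite-dimensional space, so its coefficients lie in some finite $L$. Then apply classical Galois descent for the finite group $\Gal(L/k)$ to the $\Gal(L/k)$-stable ideal generated by the $\Gal(L/k)$-conjugates of that idempotent, instead of to $I$ itself. This ideal is contained in $I$, and the argument of Step 2 applies to it verbatim.
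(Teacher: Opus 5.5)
Your proof is correct, but it reaches transitivity by a different mechanism than the paper.

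The paper passes to the semisimple top $\End_{A_K}(X_K)/J(\End_{A_K}(X_K))\simeq K\otimes_k D_X$. It identifies the central primitive idempotents of this algebra with the isomorphism classes $[Y_j]$. The sum of the idempotents in one $G$-orbit would then be a $G$-invariant idempotent, and since $(K\otimes_k D_X)^G=1\otimes D_X$ is a division algebra, there is only one orbit. That route needs only the invariants of the whole space, \eqref{eq:galois-invariants-tensor}. However, it relies on the full equality \eqref{eq:jacobson-base-change-prelim} to identify the top with $K\otimes_k D_X$. It also needs the Wedderburn decomposition and a check that the $G$-action on central idempotents matches the action on isomorphism classes.

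You instead stay in $E_K$ itself and work with the $G$-stable ideal of endomorphisms factoring through $\operatorname{add}\mathcal O$. This requires a stronger descent statement: a $G$-stable $K$-subspace of $K\otimes_k V$ equals $K\otimes_k$ of its invariants (Speiser's lemma). For profinite $G$ this needs the finite Galois reduction you sketch. Both your version with $I$ and your fallback, using the ideal of $L\otimes_k\End_A(X)$ generated by the conjugates of the idempotent, go through.

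In exchange, Step 2 only uses that $K\otimes_k J(\End_A(X))$ is a nilpotent ideal, which is the easy inclusion and holds for any extension. So perfectness enters your argument only through the Galois property of $K/k$ needed for descent. You also avoid the semisimple bookkeeping entirely. Finally, you obtain the slightly more concrete conclusion that $X_K$ is a direct summand of a finite sum of conjugates ${}^\sigma Y$ of any single chosen summand $Y$.
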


\begin{proof}
Set
\[
D_X=\End_A(X)/J(\End_A(X)).
\]
Since $X$ is indecomposable, $\End_A(X)$ is local and $D_X$ is a division
algebra.  The proof of \cref{lem:radical-base-change}, applied with
$U=V=X$, gives an isomorphism
\begin{equation*}
\frac{\End_{A_K}(X_K)}{J(\End_{A_K}(X_K))}
 \simeq K\otimes_kD_X.
\end{equation*}
This isomorphism is $G$-equivariant, where $G$ acts on the right-hand side
through the first tensor factor.  To see this explicitly, let
$t_{\sigma,X}:X_K\to{}^\sigma X_K$ be the isomorphism in
\eqref{eq:extended-module-twist-isomorphism}.  The action on
$\End_{A_K}(X_K)$ is $f\longmapsto t_{\sigma,X}^{-1}\,{}^\sigma f\,t_{\sigma,X}$. Under the standard identification
$K\otimes_k\End_A(X)\simeq\End_{A_K}(X_K)$, this sends
$\lambda\otimes u$ to $\sigma(\lambda)\otimes u$.

Write
\[
X_K\simeq\bigoplus_{j=1}^sY_j^{n_j},
\]
where the $Y_j$ are pairwise nonisomorphic indecomposable modules.
Krull--Schmidt theory gives
\[
\frac{\End_{A_K}(X_K)}{J(\End_{A_K}(X_K))}
 \simeq
 \prod_{j=1}^s
 M_{n_j}\left(
 \frac{\End_{A_K}(Y_j)}{J(\End_{A_K}(Y_j))}
 \right).
\]
The central primitive idempotents of this semisimple algebra are therefore
indexed by the isotypic summands $Y_j^{n_j}$, and hence by the isomorphism
classes $[Y_j]$.  Under the $G$-action, the idempotent corresponding to
$Y_j^{n_j}$ is sent to the idempotent corresponding to the isotypic summand
of ${}^\sigma Y_j$.  Thus the action on central primitive idempotents is the
same as the action on the isomorphism classes of indecomposable summands.

Note that the set of central primitive idempotents is finite. If the action had more than one orbit, choose one such orbit and let $e$
be the sum of the central primitive idempotents belonging to it.  Then
$e$ would be a nonzero $G$-invariant idempotent different from $1$.
Applying \eqref{eq:galois-invariants-tensor} to the underlying finite-dimensional $k$-vector space of $D_X$, we obtain
\[
(K\otimes_kD_X)^G=1\otimes D_X.
\]
Hence $e=1\otimes d$ for an idempotent $d\in D_X$.  A division algebra has
no idempotents other than $0$ and $1$, contradicting the choice of $e$.
The action is therefore transitive.
\end{proof}

We now use the preceding transitivity result to lift paths in the
Auslander--Reiten quiver of $A$.  More precisely, once an indecomposable
summand over the initial vertex is prescribed, the remaining vertices of
the path can be lifted successively.

\begin{lemma}\label{lem:path-lifting}
Let $[X_0]-[X_1]-\cdots-[X_m]$ be an unoriented path in $\Gamma_A$, and let $\widetilde X_0$ be an indecomposable direct summand of $(X_0)_K$. There are indecomposable direct summands $\widetilde X_i\mid(X_i)_K$ $(1\leq i\leq m)$ such that $[\widetilde X_0]-[\widetilde X_1]-\cdots-[\widetilde X_m]$ is an unoriented path in $\Gamma_{A_K}$.
\end{lemma}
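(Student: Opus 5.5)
By induction on $m$ it suffices to treat a single edge. So let $X\longrightarrow Y$ (or $Y\longrightarrow X$) in $\Gamma_A$, and let $\widetilde X$ be a prescribed indecomposable summand of $X_K$. I will produce an indecomposable summand $\widetilde Y$ of $Y_K$ adjacent to $\widetilde X$ in $\Gamma_{A_K}$. Applying this successively with $X=X_{i-1}$, $\widetilde X=\widetilde X_{i-1}$ and $Y=X_i$ gives the lemma. By symmetry I treat the case $X\longrightarrow Y$, i.e.\ $\Irr_A(X,Y)\neq0$.

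The first step is to find \emph{some} edge between summands. By \cref{cor:irr-base-change}, $\Irr_{A_K}(X_K,Y_K)\simeq K\otimes_k\Irr_A(X,Y)\neq0$. Decompose $X_K=\bigoplus_p U_p$ and $Y_K=\bigoplus_q V_q$ into indecomposables. The functor $\Irr_{A_K}(-,-)=\rad/\rad^2$ is additive in each variable, because $\rad$ and $\rad^2$ are ideals compatible with finite direct sums. Hence
\[
\Irr_{A_K}(X_K,Y_K)\simeq\bigoplus_{p,q}\Irr_{A_K}(U_p,V_q),
\]
so there are summands $U\mid X_K$ and $V\mid Y_K$ with $\Irr_{A_K}(U,V)\neq0$, that is, $U\longrightarrow V$ in $\Gamma_{A_K}$.

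The second step moves this edge to the prescribed summand using the Galois action. By \cref{lem:galois-transitivity} there is $\sigma\in G$ with ${}^\sigma U\simeq\widetilde X$. The twist ${}^\sigma(-)$ is an additive autoequivalence, so it preserves irreducible morphisms (\cref{subsubse:add-equ}). Therefore ${}^\sigma U\longrightarrow{}^\sigma V$, and hence $\widetilde X\longrightarrow{}^\sigma V$. Since ${}^\sigma Y_K\simeq Y_K$ via $t_{\sigma,Y}$, Krull--Schmidt shows that ${}^\sigma V$ is isomorphic to an indecomposable summand of $Y_K$. Take $\widetilde Y={}^\sigma V$.

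The main point needing care is the additivity decomposition of $\Irr$ in the first step. This is the standard fact that $\rad^2(U\oplus U',V)=\rad^2(U,V)\oplus\rad^2(U',V)$, which follows by composing with the split inclusions and projections, together with the same statement in the second variable. The case $Y\longrightarrow X$ is handled identically, using $\Irr_{A_K}(V,U)\neq0$.
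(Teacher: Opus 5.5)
Your proposal is correct and follows the paper's proof essentially step for step. Both reduce to a single edge, use \cref{cor:irr-base-change} together with biadditivity of $\Irr_{A_K}$ to find summands $U\mid X_K$ and $V\mid Y_K$ with an irreducible morphism $U\to V$, and then apply Galois transitivity and the twist ${}^\sigma(-)$ to move this edge to the prescribed summand. In both, ${}^\sigma V$ is identified with a summand of $Y_K$ via ${}^\sigma Y_K\simeq Y_K$.
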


\begin{proof}
It is enough to lift one edge while prescribing the summand over its initial
vertex.  Suppose first that $X_i\longrightarrow X_{i+1}$.  By
\cref{cor:irr-base-change},
\[
\Irr_{A_K}((X_i)_K,(X_{i+1})_K)\neq0.
\]
Choose decompositions into indecomposable modules,
\[
(X_i)_K\simeq\bigoplus_aU_a,
\qquad
(X_{i+1})_K\simeq\bigoplus_bV_b.
\]
The radical and its square are additive in each variable, so
\[
\Irr_{A_K}((X_i)_K,(X_{i+1})_K)
 \simeq\bigoplus_{a,b}\Irr_{A_K}(U_a,V_b).
\]
Hence there are summands $U\mid(X_i)_K$ and
$V\mid(X_{i+1})_K$ and an irreducible morphism $f:U\to V$.

By \cref{lem:galois-transitivity}, there is $\sigma\in G$ and an
isomorphism $\varphi:{}^\sigma U\xrightarrow{\sim}\widetilde X_i$. The morphism ${}^\sigma f\,\varphi^{-1}: \widetilde X_i\longrightarrow{}^\sigma V$ is irreducible.  
Transporting the summand ${}^\sigma V$ along a fixed isomorphism \({}^\sigma(X_{i+1})_K\xrightarrow{\sim}(X_{i+1})_K\), we obtain an indecomposable direct summand
$\widetilde X_{i+1}\mid(X_{i+1})_K$ isomorphic to ${}^\sigma V$.

If the edge is oriented as $X_{i+1}\longrightarrow X_i$, the same argument is applied with the prescribed summand as target.  Starting with an arrow $f:U\to V$, where $V\mid(X_i)_K$, choose $\psi:{}^\sigma V\xrightarrow{\sim}\widetilde X_i$ and use the irreducible morphism $\psi\,{}^\sigma f:{}^\sigma U\longrightarrow\widetilde X_i$. Induction on $i$ completes the lift.
\end{proof}

The following is the main result of this subsection.
\begin{proposition}\label{prop:components-base-change}
If $\Gamma_A$ has only finitely many connected components, then
$\Gamma_{A_K}$ has only finitely many connected components.
\end{proposition}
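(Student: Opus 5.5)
Let $\mathcal C_1,\dots,\mathcal C_n$ be the connected components of $\Gamma_A$, and fix a representative vertex $[X^{(c)}]\in\mathcal C_c$ for each $c$. Decompose each $(X^{(c)})_K$ into indecomposable summands, which gives finitely many isomorphism classes $[Z_{c,1}],\dots,[Z_{c,s_c}]$ of indecomposable $A_K$-modules. The plan is to show that every vertex of $\Gamma_{A_K}$ lies in the same connected component as one of these finitely many vertices. Then $\Gamma_{A_K}$ has at most $\sum_c s_c$ connected components.

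Let $[Y]$ be any vertex of $\Gamma_{A_K}$. By \cref{lem:descent-summand}, $Y$ is isomorphic to a direct summand of $X_K$ for some indecomposable $A$-module $X$. The vertex $[X]$ lies in some component $\mathcal C_c$, so there is an unoriented path $[X]=[X_0]-[X_1]-\cdots-[X_m]=[X^{(c)}]$ in $\Gamma_A$. Apply \cref{lem:path-lifting} to this path with the prescribed summand $\widetilde X_0=Y$ of $(X_0)_K$. This gives an unoriented path in $\Gamma_{A_K}$ from $[Y]$ to $[\widetilde X_m]$, where $\widetilde X_m$ is an indecomposable direct summand of $(X^{(c)})_K$. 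By Krull--Schmidt, $[\widetilde X_m]=[Z_{c,j}]$ for some $j$. Hence $[Y]$ lies in the component of some $[Z_{c,j}]$, which is what we need.

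No step here is a real obstacle, because the substantive work is already done in \cref{lem:descent-summand}, \cref{lem:galois-transitivity} and \cref{lem:path-lifting}. The only point that needs care is that path lifting lets us prescribe the summand at the \emph{starting} vertex. That is why we start the path at $[X]$, where the arbitrary summand $Y$ is given, and end it at the fixed representative, where we only need to land in the finite set of summands of $(X^{(c)})_K$. The fact that $(X^{(c)})_K$ has only finitely many indecomposable summands up to isomorphism follows because it is finite-dimensional over $K$.
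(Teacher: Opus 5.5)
Your proposal is correct and follows essentially the same argument as the paper. The paper also collects the finitely many summand classes of the scalar extensions of one representative per component. It then takes an arbitrary indecomposable $A_K$-module, uses \cref{lem:descent-summand} to realize it as a summand of some $X_K$, and applies \cref{lem:path-lifting} with that module as the prescribed initial summand along a path ending at a representative.
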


\begin{proof}
Choose indecomposable modules $M_1,\ldots,M_t$, one from each connected
component of $\Gamma_A$.  Each $(M_i)_K$ has only finitely many
indecomposable direct summands.  Let $\mathcal S$ be the finite set of their
isomorphism classes.

Let $N$ be an indecomposable $A_K$-module.  By
\cref{lem:descent-summand}, there is an indecomposable $A$-module $X$ such
that $N\mid X_K$.  The vertex $[X]$ is connected in $\Gamma_A$ to some
$[M_i]$.  Apply \cref{lem:path-lifting} to a path from $[X]$ to $[M_i]$,
with $N$ as the prescribed initial summand.  The lifted path connects $[N]$
to a member of $\mathcal S$.  Thus every connected component of
$\Gamma_{A_K}$ meets the finite set $\mathcal S$, and there are only
finitely many such components.
\end{proof}

\subsection{Proof over a perfect field}

\begin{proof}[Proof of \cref{thm:main}]
Let $K=\kkbar$.  Since $k$ is perfect, the algebraic extension $K/k$ is
separable and hence MacLane separable.  By Jensen and Lenzing's base-field
theorem, see \cref{Perfect fields and Galois extensions}, $A$ is of finite representation
type if and only if $A_K$ is.  Thus $A_K$ is representation-infinite.

The field $K$ is algebraically closed, so
\cref{thm:algebraically-closed} shows that $\Gamma_{A_K}$ has infinitely
many connected components.  If $\Gamma_A$ had only finitely many connected
components, \cref{prop:components-base-change} would imply that
$\Gamma_{A_K}$ also had only finitely many.  This contradiction proves the
theorem.
\end{proof}

\section*{Acknowledgements}
The authors thank Jes\'us Efr\'en P\'erez Terrazas for helpful comments and for bringing the perfect-field version of the tame case, implicit in~\cite[Theorem~3.2 and Corollary~6.16]{CanDeVicentePerez}, to their attention.
The second author thanks Michael Shapiro for teaching a seminar-style course at
Michigan State University from January to May 2025, based on Schiffler's
book~\cite{Schiffler}, through which he learned the foundations of the
representation theory of algebras.

\section*{Statement on AI usage}

The main theorem and the overall proof strategy originated from the human authors. In particular, the first author realized that the representation embeddings developed in \cite{Bongartz,BPS} can be employed to distinguish the distinct components of the AR-quiver of representation-infinite algebras. The second author subsequently proposed that the orbit lengths of adjacent vertices in the AR-quiver under category self-equivalences can be used, via large primes, to define an invariant of the components of the AR-quiver. 

ChatGPT assisted
with the technical construction and verification of the semilinear
twist argument in Lemma~\ref{lem:scalar-twist}.
ChatGPT was used in extending the result from
algebraically closed fields to arbitrary perfect fields.  In
particular, it assisted with the formulation and technical details of
Lemmas~\ref{lem:radical-square-base-change},
\ref{lem:galois-transitivity}, and \ref{lem:path-lifting}, and
Proposition~\ref{prop:components-base-change}.  The authors subsequently
checked and revised all AI-assisted arguments and take full
responsibility for the contents of the paper.


\begin{thebibliography}{99}
\bibitem[AR75]{AR75}
M.~Auslander and I.~Reiten,
Representation theory of Artin algebras III: Almost split sequences,
\emph{Comm. Algebra} \textbf{3} (1975), no.~3, 239--294.

\bibitem[ARS89]{ARS-Galois}
M.~Auslander, I.~Reiten, and S.~O.~Smal{\o},
Galois actions on rings and finite Galois coverings,
\emph{Math. Scand.} \textbf{65} (1989), no.~1, 5--32.

\bibitem[ARS95]{ARS}
M.~Auslander, I.~Reiten, and S.~O.~Smal{\o},
\emph{Representation Theory of Artin Algebras},
Cambridge Studies in Advanced Mathematics, vol.~36,
Cambridge University Press, Cambridge, 1995.

\bibitem[Ba85]{Bautista85}
R.~Bautista,
On algebras of strongly unbounded representation type,
\emph{Comment. Math. Helv.} \textbf{60} (1985), no.~3, 392--399.

\bibitem[BPS25]{BPS}
R.~Bautista, E.~P\'erez, and L.~Salmer\'on,
A representation embedding for algebras of infinite type,
\emph{J. Pure Appl. Algebra} \textbf{229} (2025), no.~6,
Paper No.~107955.

\bibitem[BCB24]{BennettCrawleyBoevey}
R.~Bennett-Tennenhaus and W.~Crawley-Boevey,
Semilinear clannish algebras,
\emph{Proc. Lond. Math. Soc.} (3) \textbf{129} (2024), no.~4,
Paper No.~e12637.

\bibitem[B85]{Bongartz85}
K.~Bongartz,
Indecomposables are standard,
\emph{Comment. Math. Helv.} \textbf{60} (1985), no.~3, 400--410.

\bibitem[B16]{Bongartz}
K.~Bongartz,
Representation embeddings and the second Brauer--Thrall conjecture,
preprint, arXiv:1611.02017v5 [math.RT], 2023.

\bibitem[BHK25]{BHK25}
E.~D.~B{\o}rve, E.~J.~Hanson, and M.~Kaipel,
Bricks and $\tau$-tilting theory under base field extensions,
preprint, arXiv:2508.01040 [math.RT], 2025.

\bibitem[CDP14]{CanDeVicentePerez}
\'A.~F.~Can Cabrera, J.~P.~De-Vicente Moo, and J.~E.~P\'erez Terrazas,
Differential tensor algebras and endolength vectors,
\emph{Colloq. Math.} \textbf{136} (2014), no.~1, 75--98.

\bibitem[CB88]{CrawleyBoevey}
W.~W.~Crawley-Boevey,
On tame algebras and bocses,
\emph{Proc. Lond. Math. Soc.} (3) \textbf{56} (1988), no.~3, 451--483.

\bibitem[DI71]{DeMeyerIngraham}
F.~DeMeyer and E.~Ingraham,
\emph{Separable Algebras over Commutative Rings},
Lecture Notes in Mathematics, vol.~181,
Springer-Verlag, Berlin--New York, 1971.

\bibitem[DR78]{DlabRingel}
V.~Dlab and C.~M.~Ringel,
The representations of tame hereditary algebras,
in \emph{Representation Theory of Algebras} (Proc. Conf., Temple Univ.,
Philadelphia, 1976), Lecture Notes in Pure and Appl. Math., vol.~37,
Dekker, New York, 1978, pp.~329--353.

\bibitem[F00]{Farnsteiner00}
R.~Farnsteiner,
On the Auslander--Reiten quiver of an infinitesimal group,
\emph{Nagoya Math. J.} \textbf{160} (2000), 103--121.


\bibitem[JL82]{JensenLenzing}
C.~U.~Jensen and H.~Lenzing,
Homological dimension and representation type of algebras under base field
extension,
\emph{Manuscripta Math.} \textbf{39} (1982), no.~1, 1--13.

\bibitem[K00]{Kasjan}
S.~Kasjan,
Auslander--Reiten sequences under base field extension,
\emph{Proc. Amer. Math. Soc.} \textbf{128} (2000), no.~10, 2885--2896.

\bibitem[K13]{Kuelshammer13}
J.~K\"ulshammer,
Representation type of Frobenius--Lusztig kernels,
\emph{Quart. J. Math.} \textbf{64} (2013), no.~2, 471--488.

\bibitem[K15]{KuelshammerCorrigendum}
J.~K\"ulshammer,
Corrigendum ``Representation type of Frobenius--Lusztig kernels'',
\emph{Quart. J. Math.} \textbf{66} (2015), no.~4, 1139.

\bibitem[L02]{Lang}
S.~Lang,
\emph{Algebra}, 3rd ed.,
Graduate Texts in Mathematics, vol.~211,
Springer, New York, 2002.

\bibitem[MS24]{MalicSchroll}
G.~Mali\'c and S.~Schroll,
Dessins d'enfants, Brauer graph algebras and Galois invariants,
\emph{Algebr. Represent. Theory} \textbf{27} (2024), 655--665.

\bibitem[P82]{Pierce}
R.~S.~Pierce,
\emph{Associative Algebras},
Graduate Texts in Mathematics, vol.~88,
Springer-Verlag, New York--Berlin, 1982.

\bibitem[R78]{Ringel78}
C.~M.~Ringel,
Finite dimensional hereditary algebras of wild representation type,
\emph{Math. Z.} \textbf{161} (1978), no.~3, 235--255.

\bibitem[R80]{Ringel}
C.~M.~Ringel,
Report on the Brauer--Thrall conjectures: Rojter's theorem and the theorem
of Nazarova and Rojter (on algorithms for solving vectorspace problems. I),
in \emph{Representation Theory I} (Proc. Workshop, Carleton Univ., Ottawa,
1979), Lecture Notes in Mathematics, vol.~831,
Springer, Berlin, 1980, pp.~104--136.

\bibitem[R68]{Roiter}
A.~V.~Roiter,
Unbounded dimensionality of indecomposable representations of an algebra
with an infinite number of indecomposable representations,
\emph{Math. USSR-Izv.} \textbf{2} (1968), no.~6, 1223--1230.

\bibitem[S14]{Schiffler}
R.~Schiffler,
\emph{Quiver Representations},
CMS Books in Mathematics,
Springer, Cham, 2014.


\bibitem[W97]{Washington}
L.~C.~Washington,
\emph{Introduction to Cyclotomic Fields}, 2nd ed.,
Graduate Texts in Mathematics, vol.~83,
Springer, New York, 1997.


\end{thebibliography}
\end{document}